\newtheorem{theorem}{Theorem}[section]
\newtheorem{proposition}[theorem]{Proposition}
\newtheorem{lemma}[theorem]{Lemma}
\renewcommand{\>}{\right\rangle}
 \renewcommand{\(}{\left(}
\renewcommand{\)}{\right)}
\renewcommand{\[}{\left[}
\renewcommand{\]}{\right]}
\newcommand{\eps}{\varepsilon}
\newcommand{\de}{\delta}
 \newcommand{\rr}{ \mathbb{R}}
 \newcommand{\di}{ \mathbf{d}}
 \newcommand{\ti}{ \mathbf{t}}
\begin{document}

\title[Bubble concentration on spheres for supercritical elliptic problems]{Bubble concentration on spheres for supercritical elliptic problems}
\author{Filomena Pacella}
\address[Filomena Pacella] {Dipartimento di Matematica "G.Castelnuovo", Universit\`{a} di Roma ``La Sapienza", P.le Aldo Moro 5, 00185 Roma, Italy}
\email{pacella@mat.uniroma1.it}

\author{Angela Pistoia}
\address[Angela Pistoia] {Dipartimento SBAI, Universit\`{a} di Roma ``La Sapienza", via Antonio Scarpa 16, 00161 Roma, Italy}
\email{pistoia@sbai.uniroma1.it}

\begin{abstract}
We consider the supercritical Lane-Emden problem
$$(P_\eps)\qquad
 -\Delta v= |v|^{p_\eps-1} v \ \hbox{in}\ \mathcal{A} ,\quad u=0\ \hbox{on}\ \partial\mathcal{A} $$
 where $\mathcal A$ is an annulus in $\rr^{2m},$ $m\ge2$ and $p_\eps={(m+1)+2\over(m+1)-2}-\eps$, $\eps>0.$

 We prove the existence of positive and sign changing solutions of $(P_\eps)$ concentrating and blowing-up,
 as $\eps\to0$, on $(m-1)-$dimensional spheres. Using a reduction method (\cite{RS,PS})
 we transform problem $(P_\eps)$ into a nonhomogeneous problem in an annulus $\mathcal D\subset \rr^{m+1}$ which can be solved by a Ljapunov-Schmidt finite
 dimensional reduction.

 \end{abstract}

 \subjclass[2010]{35J61, 35B25, 35B40}

\date{\today}

\keywords{supercritical problem, concentration on manifolds} \maketitle

\maketitle
\section{Introduction}

In this paper we address the question of finding solutions concentrated on manifolds of positive dimension of supercritical elliptic problems
of the type

  \begin{equation}
 \label{prob1}
 -\Delta v= |v|^{p-1} v \ \hbox{in}\ \mathcal{A} ,\quad u=0\ \hbox{on}\ \partial\mathcal{A} ,
 \end{equation}
 where $\mathcal{A} :=\{y\in \rr^{d}\ :\ a< |y|< b\} ,$ $a>0,$   is an annulus   in $\rr^{d} ,$ $d>2$
 and $p>{d+2\over d-2}$ is a supercritical exponent.

 We remark that the critical and supercritical Lane-Emden problems are very delicate due to topological and geometrical obstruction
 enlightened by the Pohozaev's identity (\cite{po}). We also point out that in the supercritical case a result of Bahri-Coron type (\cite{BC}) cannot hold in general
 nontrivially topological domains as shown by a nonexistence result of Passaseo (\cite{pa}), obtained exploiting critical exponents in lower dimensions.
 Using similar ideas, some results for exponents $p$ which are subcritical in dimension $n<d$ and instead supercritical in dimension $d$ have been obtained in different kind of domains in \cite{ACP,BCGP,CFP,DMP,GG,KP1,KP2,PPS}.

 Here we consider annuli in even dimension $d=2m,$ $m\ge2$ and obtain results about the existence of solutions, both positive and sign changing, of
 different type, concentrated on $(m-1)-$dimensional spheres. More precisely, we have

\begin{theorem}
\label{1.1} [Case of positive solutions] Let $\mathcal A\subset \rr^{2m},$ $m\ge2$ and define $(\partial \mathcal A)_a:=\left\{y\in\partial\mathcal A\ :\ |y|=a\right\}.$
There exists $\epsilon _{0}>0$ such that  for any $\epsilon
\in (0,\epsilon _{0}),$ the following supercritical problem
 \begin{equation}
 \label{prob2}
-\Delta v= |v|^{p_\eps-1} v \ \hbox{in}\ \mathcal{A} ,\quad u=0\ \hbox{on}\ \partial\mathcal{A} ,
 \end{equation}
 with $p_\eps={(m+1)+2\over(m+1)-2}-\eps$ has:
 \begin{itemize}
\item [{i)}] a positive solution $v_\eps$ which concentrates and blows-up on a $(m-1)-$dimensional sphere $\Gamma\subset(\partial \mathcal A)_a$ as $\eps\to0,$
\item [{ii)}]  a positive solution $v_\eps$ which concentrates and blows-up on two $(m-1)-$dimensional orthogonal spheres $\Gamma_1\subset(\partial \mathcal A)_a$
and $\Gamma_2\subset(\partial \mathcal A)_a$ as $\eps\to0,$
 \end{itemize}
\end{theorem}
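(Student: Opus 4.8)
\emph{Plan of proof.}

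\textbf{Step 1: reduction.}
Write $\rr^{2m}=\rr^m\times\rr^m$. The reduction of \cite{RS,PS} associates to $(P_\eps)$ a nonhomogeneous problem
$$
(\tilde P_\eps)\qquad -\operatorname{div}\big(a(x)\nabla u\big)=b(x)\,|u|^{p_\eps-1}u\ \text{ in }\ \mathcal D,\qquad u=0\ \text{ on }\ \partial\mathcal D,
$$
on an annulus $\mathcal D=\{a^2<|x|<b^2\}\subset\rr^{m+1}$, with $a,b>0$ smooth on $\overline{\mathcal D}$, and a fibration $\Phi\colon\mathcal A\to\mathcal D$ whose fibres are $(m-1)$--spheres, in such a way that every solution of $(\tilde P_\eps)$ pulls back through $\Phi$ to a solution of $(P_\eps)$. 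The decisive gain is that in $(\tilde P_\eps)$ the exponent $p_\eps=\tfrac{(m+1)+2}{(m+1)-2}-\eps$ is now slightly \emph{subcritical} in dimension $m+1$. Moreover $\Phi$ maps $(\partial\mathcal A)_a$ onto the inner boundary $\partial_a\mathcal D:=\{|x|=a^2\}$, so that a solution of $(\tilde P_\eps)$ concentrating, as $\eps\to0$, at a point $\xi\in\partial_a\mathcal D$ lifts to a solution of $(P_\eps)$ concentrating on the $(m-1)$--sphere $\Phi^{-1}(\xi)\subset(\partial\mathcal A)_a$, and concentration at the two poles $(\pm a^2,0,\dots,0)$ of $\partial_a\mathcal D$ lifts to concentration on the two orthogonal spheres $S^{m-1}_a\times\{0\}$ and $\{0\}\times S^{m-1}_a$. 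Finally one checks that the weights are arranged so that the leading part of the reduced energy is lowered when the concentration point is moved towards $\partial_a\mathcal D$ (the weight acting on the nonlinearity is largest there); this is what makes blow--up on that part of the boundary energetically admissible, in contrast with the unweighted almost--critical problem whose solutions concentrate in the interior.

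\textbf{Step 2: finite--dimensional reduction.}
Since $p_\eps$ is subcritical in dimension $m+1$, the building block for $(\tilde P_\eps)$ is the Aubin--Talenti bubble of $\rr^{m+1}$,
$$
U_{\delta,\xi}(x)=\alpha\Big(\tfrac{\delta}{\delta^2+|x-\xi|^2}\Big)^{\frac{m-1}{2}},
$$
together with its projection $PU_{\delta,\xi}$ onto $H^1_0(\mathcal D)$. For i) take the one--bubble ansatz $W=PU_{\delta,\xi}$ with $\xi=(t,0,\dots,0)$, $a^2<t$, $\delta\ll t-a^2$, which is axially symmetric and therefore compatible with $\Phi$; for ii) take $W=PU_{\delta_1,\xi_1}+PU_{\delta_2,\xi_2}$ with $\xi_1=(t,0,\dots,0)$, $\xi_2=(-t,0,\dots,0)$, $\delta_1=\delta_2$, the configuration fixed by the symmetry $x_1\mapsto-x_1$ composed with the interchange of the two bubbles. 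Writing $u=W+\phi$, with $\phi$ orthogonal in $H^1_0(\mathcal D)$ to the kernel of the linearization of $(\tilde P_\eps)$ at $W$ (spanned by the $\partial_{\delta_i}PU_{\delta_i,\xi_i}$ and $\partial_{\xi_i}PU_{\delta_i,\xi_i}$), one solves the auxiliary equation for $\phi=\phi_\eps(\delta,\xi)$ by a contraction mapping argument, using the uniform invertibility of the linearized operator on the orthogonal complement (with inverse of norm $O(|\log\eps|)$); this yields $\phi_\eps$, smooth in the parameters, with $\|\phi_\eps\|_{H^1_0}=o(\|W\|)$ at an explicit rate. Apart from the bookkeeping caused by the weights and by $\xi$ lying close to $\partial\mathcal D$, this is the classical Ljapunov--Schmidt scheme.

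\textbf{Step 3: reduced energy and critical points.}
Let $\mathcal E_\eps$ be the energy of $(\tilde P_\eps)$ and set $\mathcal J_\eps(\delta,\xi):=\mathcal E_\eps\big(W+\phi_\eps(\delta,\xi)\big)$; its critical points solve $(\tilde P_\eps)$. A $C^1$--expansion, \emph{uniform for $\xi$ near $\partial_a\mathcal D$}, has the shape
$$
\mathcal J_\eps=c_0\,g(|\xi|)+\eps\big(c_1-c_2\log\delta\big)+c_3\,\delta^{m-1}H(\xi,\xi)+(\text{interaction term, only in ii})+\text{l.o.t.},
$$
where $H$ is the Robin function of $-\Delta$ on $\mathcal D$ (so $H(\xi,\xi)\to+\infty$ as $\xi\to\partial\mathcal D$), $g$ is smooth and strictly increasing in $|\xi|$ near $a^2$ (this encodes Step 1), the $c_i$ are positive, and in case ii) the interaction term equals $-c_4(\delta_1\delta_2)^{\frac{m-1}{2}}|\xi_1-\xi_2|^{-(m-1)}+\cdots$. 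Rescaling $\delta=\eps^{1/(m-1)}d$ and, in case ii), restricting $\mathcal J_\eps$ to the symmetric configurations — legitimate, since a critical point of a symmetric functional on the fixed--point set of its symmetries is a genuine critical point — one is reduced to finding a critical point of an explicit function $\Psi_\eps(t,d)$ on $(a^2,b^2)\times(0,\infty)$, up to $o(\eps)$. One then shows that $\Psi_\eps$ has a critical point $(t_\eps,d_\eps)$, obtained as a local minimum, with $t_\eps\downarrow a^2$ and $d_\eps$ staying in a fixed compact subset of $(0,\infty)$: the term $c_0 g(|\xi|)$ pushes $t$ towards $a^2$ at order $O(1)$, the Robin term $c_3\delta^{m-1}H\sim c_3\,\eps\,d^{m-1}(t-a^2)^{-(m-1)}$ forbids $t$ from reaching $a^2$, and the balance between the two produces the critical point with $t_\eps-a^2\to0^+$ as $\eps\to0$; in case ii) the two poles remain at the fixed distance $2a^2$, so the interaction term is of lower order and does not affect this analysis.

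\textbf{Conclusion and expected main obstacle.}
A critical point $(\delta_\eps,\xi_\eps)$ of $\mathcal J_\eps$ gives a solution $u_\eps=W+\phi_\eps$ of $(\tilde P_\eps)$; since $W>0$ and $\phi_\eps$ is small, $u_\eps>0$ near the concentration region, hence everywhere by the maximum principle. Pulling $u_\eps$ back through $\Phi$ produces a positive solution $v_\eps$ of $(P_\eps)$ that concentrates and blows up, as $\eps\to0$, on the $(m-1)$--sphere $\Gamma=\Phi^{-1}(\xi_\eps)\subset(\partial\mathcal A)_a$ in case i), and on the two orthogonal $(m-1)$--spheres $\Gamma_1,\Gamma_2\subset(\partial\mathcal A)_a$ in case ii) — which is the content of Theorem \ref{1.1}. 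The step I expect to be the genuine difficulty is the energy expansion of Step 3 \emph{uniform as $\xi\to\partial\mathcal D$}: there the concentrating bubble really interacts with the boundary at the very moment the weight is driving it onto $\partial_a\mathcal D$, so the standard interior expansions of $\mathcal E_\eps(PU_{\delta,\xi})$ and of $H(\xi,\xi)$ must be carried out with precise control of the boundary contributions and of the coupling between $\delta$ and $\operatorname{dist}(\xi,\partial\mathcal D)$.
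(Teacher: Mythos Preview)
Your plan follows the paper's route closely: reduce via \cite{PS} to the weighted almost--critical problem $-\Delta u=\frac{1}{2|x|}|u|^{p_\eps-1}u$ on an annulus in $\rr^{m+1}$, build axially symmetric one-- and two--bubble approximations, perform a Ljapunov--Schmidt reduction, and find a critical point of the reduced energy which, through the fibration, yields the spheres in $(\partial\mathcal A)_a$. This is exactly how the paper proceeds (Propositions~1.3--1.4 for the reduction, Theorem~1.5(i)--(ii) for the bubbles, Section~2 for the finite--dimensional scheme).

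The one place where your outline departs from the paper --- and would cause trouble if carried out as written --- is the scaling in Step~3. You take $\delta=\eps^{1/(m-1)}d$ and leave the distance $\tau=\mathrm{dist}(\xi,\partial\mathcal D)$ unscaled, then argue that an $O(1)$ drift from the weight balances an $O(\eps)$ Robin barrier. If you actually optimise this, you find $\tau_\eps\sim\eps$ and then $d_\eps\sim\eps$, contradicting your claim that $d_\eps$ stays compact in $(0,\infty)$. The paper instead scales \emph{both} quantities: $\delta=\eps^{m/(m-1)}d$ and $\tau=\eps t$ (i.e.\ $\xi=(1+\eps t)\xi_0$). With this choice the Robin term $(\delta/\tau)^{m-1}$, the weight contribution $\int(\frac{1}{|x|}-1)U^{p+1}\sim-\gamma_1\tau$, and the $\eps\log\delta$ term are all exactly of order~$\eps$, so the reduced functional is $\widetilde J_\eps=c_1+c_2\eps+c_3\eps\log\eps+\eps\,\Phi(d,t)+o(\eps)$ with
\[
\Phi(d,t)=c_4\Big(\frac{d}{2t}\Big)^{m-1}+c_5\,t-c_6\log d,
\]
and this $\Phi$ has an honest interior minimum in $(d,t)\in(0,\infty)^2$, stable under $C^0$--perturbation. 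That is the mechanism producing the critical point; the ``weight pushes towards the inner boundary'' intuition is correct, but it materialises at order $\eps$, not $O(1)$. (Two small side remarks: the reduced equation has $a\equiv1$, only the right--hand side is weighted; and the linear operator is invertible with a bound independent of $\eps$, not $O(|\log\eps|)$.)
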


\begin{theorem}
\label{1.2} [Case of sign changing solutions] Let $\mathcal A\subset \rr^{2m},$ $m\ge2$ and define $(\partial \mathcal A)_a:=\left\{y\in\partial\mathcal A\ :\ |y|=a\right\}.$
There exists $\epsilon _{0}>0$ such that  for any $\epsilon
\in (0,\epsilon _{0}),$ the   supercritical problem \eqref{prob2} with
   $p_\eps={(m+1)+2\over(m+1)-2}-\eps$ has:
 \begin{itemize}
\item [{i)}] a  sign changing solution $v_\eps$ such that $v_\eps^+$ and $ v_\eps^-$   concentrate  and blow-up on   two $(m-1)-$dimensional orthogonal spheres $\Gamma_+\subset(\partial \mathcal A)_a$
and $\Gamma_-\subset(\partial \mathcal A)_a,$ respectively, as $\eps\to0,$
\item [{ii)}]  a  sign changing solution $v_\eps$ such that $v_\eps^+$ and $ v_\eps^-$   concentrate  and blow-up on   the same $(m-1)-$dimensional  sphere  $\Gamma \subset(\partial \mathcal A)_a,$   as $\eps\to0,$
  \item [{iii)}] two  sign changing solutions $v^1_\eps$ and $v^2_\eps$ each one is such that $(v_\eps^i)^+$ and $ (v_\eps^i)^-$   concentrate  and blow-up on   two $(m-1)-$dimensional orthogonal spheres $(\Gamma_i)_+\subset(\partial \mathcal A)_a$
and $(\Gamma_i)_-\subset(\partial \mathcal A)_a,$ respectively, as $\eps\to0,$ $i=1,2.$
 \end{itemize}
\end{theorem}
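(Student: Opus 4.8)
The plan is to combine the reduction method of \cite{RS,PS} with a classical Ljapunov--Schmidt finite dimensional reduction for the resulting (now subcritical) problem. \emph{Step 1: the reduction.} Write $y=(y_1,y_2)\in\rr^{m}\times\rr^{m}$ and look for $O(m)\times O(m)$--invariant solutions $v=\hat v(|y_1|,|y_2|)$ of $(P_\eps)$. Passing to polar coordinates $|y_1|=\rho\cos\theta,\ |y_2|=\rho\sin\theta$ in the $(|y_1|,|y_2|)$--plane, and then setting $r=\rho^{2}$, $\phi=2\theta$, a direct computation shows that $-\Delta_{\rr^{2m}}v$ becomes $4r\,\Delta_{\rr^{m+1}}u$, where $u$ is $\hat v$ read as an axially symmetric function on $\rr^{m+1}$ with $|x|=r$ and polar angle $\phi$. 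Hence $v$ solves $(P_\eps)$ if and only if $u$ solves
\begin{equation}\label{red}
-\Delta u=\frac{1}{4|x|}\,|u|^{p_\eps-1}u\quad\hbox{in }\mathcal D,\qquad u=0\quad\hbox{on }\partial\mathcal D ,
\end{equation}
on the annulus $\mathcal D=\{x\in\rr^{m+1}:a^{2}<|x|<b^{2}\}$; this problem is \emph{subcritical} in $\rr^{m+1}$ because $p_\eps$ is the critical Sobolev exponent of $\rr^{m+1}$ minus $\eps$, and its coefficient $\tfrac1{4|x|}$ is smooth, strictly positive on $\overline{\mathcal D}$ and \emph{largest on the inner sphere} $\{|x|=a^{2}\}$. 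Under this correspondence, blow--up of $u$ at the north pole $Q_N:=a^{2}e_1$ (resp. south pole $Q_S:=-a^{2}e_1$) of the inner sphere corresponds to blow--up of $v$ on the $(m-1)$--sphere $\Gamma_N:=\{|y_1|=a,\ y_2=0\}$ (resp. $\Gamma_S:=\{y_1=0,\ |y_2|=a\}$), and $\Gamma_N,\Gamma_S$ are orthogonal spheres contained in $(\partial\mathcal A)_a$. Thus every item of Theorem~\ref{1.2} (and of Theorem~\ref{1.1}) amounts to producing axially symmetric solutions of \eqref{red}, with a prescribed sign pattern, blowing up at $Q_N$, at $Q_S$, or at both.

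\emph{Step 2: the Ljapunov--Schmidt scheme.} Set $N=m+1$, let $U(z)=\alpha_N(1+|z|^{2})^{-(N-2)/2}$ be the standard bubble, $U_{\de,\xi}(x)=\de^{-(N-2)/2}U((x-\xi)/\de)$, and $PU_{\de,\xi}$ its projection onto $H^{1}_{0}(\mathcal D)$ (after the rescaling that absorbs the value of the coefficient at $\xi$). As first approximation take $W=\sum_{i=1}^{k}\sigma_i\,PU_{\de_i,\xi_i}$ with $\sigma_i\in\{\pm1\}$, $\de_i=\de_i(\eps)\to0$ and $\xi_i=\xi_i(\eps)\to\overline\xi_i\in\{Q_N,Q_S\}$, the centres $\xi_i$ lying on the $x_1$--axis so that $W$ is automatically axially symmetric. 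Since $p_\eps$ is subcritical in $\rr^{m+1}$ the usual machinery applies: write $u=W+\phi$, split $H^{1}_{0,\mathrm{ax}}(\mathcal D)=K\oplus K^{\perp}$ with $K$ spanned by the projections of $\partial_{\de_i}U_{\de_i,\xi_i}$ and of the \emph{axial} translations of $U_{\de_i,\xi_i}$ (the only ones compatible with the symmetry), project \eqref{red} onto $K^{\perp}$, and solve for $\phi=\phi_\eps(\de,\xi)\in K^{\perp}$ by a contraction argument. This rests on (a) the uniform-in-$\eps$ invertibility of the linearized operator on $K^{\perp}$ --- which follows from the nondegeneracy of $U$, the smoothness and positivity of the coefficient, and, near the axis and near $Q_N,Q_S$, from transferring the estimates back to the \emph{regular} picture in $\rr^{2m}$ via the $O(m)\times O(m)$--symmetry --- and (b) the error estimate $\|\mathcal R(W)\|_{*}=O\big(\eps|\log\eps|+\sum_i\de_i^{(N-2)/2}d(\xi_i,\partial\mathcal D)^{-(N-2)/2}+\mathcal I(\de)\big)$, $\mathcal I(\de)$ collecting the bubble interactions; one obtains $\phi_\eps$ of strictly smaller order than the leading term of the reduced energy.

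\emph{Step 3: reduced energy and conclusions.} Then $u=W+\phi_\eps$ solves \eqref{red} iff $(\de,\xi)$ is a critical point of $\mathcal J_\eps(\de,\xi):=E_\eps(W+\phi_\eps)$, $E_\eps$ the energy of \eqref{red}, and the expansion has the form
\begin{equation}\label{Jexp}
\mathcal J_\eps(\de,\xi)=\frac{S^{N/2}}{N}\sum_{i=1}^{k}\big(4|\xi_i|\big)^{\frac{N-2}{2}}+c_1\sum_{i=1}^{k}\frac{\de_i^{\,N-2}}{d(\xi_i,\partial\mathcal D)^{N-2}}+\mathcal I(\de)-c_3\,\eps\sum_{i=1}^{k}\log\de_i+c_4\,\eps+\hbox{h.o.t.},
\end{equation}
with $c_1,c_3>0$, where $\mathcal I(\de)$ is of order $(\de_i\de_j)^{(N-2)/2}$ for bubbles at different poles and of order $(\de_2/\de_1)^{(N-2)/2}$ in a tower. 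The leading $O(1)$ term \emph{decreases} as $\xi_i$ moves towards $\{|x|=a^{2}\}$, and among axially symmetric configurations its only admissible limit points are $Q_N$ and $Q_S$ — this is precisely what forces concentration at the poles, i.e. on $(\partial\mathcal A)_a$. Writing $\xi_i=(a^{2}+d_i)(\pm e_1)$, the $O(1)$ term reads $\hbox{const}+(\hbox{const})\sum_i d_i+\dots$, competing with the boundary repulsion $\de_i^{N-2}/d_i^{N-2}$; optimizing in $d_i$ gives $d_i\sim\de_i^{(N-2)/(N-1)}$, and then optimizing in $\de_i$ against $-c_3\eps\log\de_i$ gives $\de_i\sim\eps^{(N-1)/(N-2)}$, $d_i\sim\eps$, so $\de_i\ll d_i\to0$ and \eqref{Jexp} is self--consistent. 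Now: for items (i) and (iii) take $k=2$, $\sigma_1=+1$, $\sigma_2=-1$, $\overline\xi_1=Q_N$, $\overline\xi_2=Q_S$; the two bubbles stay at distance $\sim 2a^{2}$, $\mathcal I(\de)$ is negligible and \eqref{Jexp} splits, up to lower order, into two identical copies, each with a strict nondegenerate minimum — this gives the solution of (i), with $v_\eps^{+}$ blowing up on $\Gamma_N$ and $v_\eps^{-}$ on $\Gamma_S$; keeping the small (sign--definite) interaction and exploiting the symmetry $y_1\leftrightarrow y_2$ (which exchanges the two poles), a further min--max/degree argument on the reduced functional produces a second critical point of the same type, hence the two solutions $v_\eps^{1},v_\eps^{2}$ of (iii). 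For item (ii) take instead $k=2$, $\sigma_1=+1$, $\sigma_2=-1$, $\overline\xi_1=\overline\xi_2=Q_N$ and $\de_2\ll\de_1$ — a sign changing bubble--tower as in \cite{PS}: now $\mathcal I(\de)\sim(\de_2/\de_1)^{(N-2)/2}$ is a genuine term of \eqref{Jexp}, and balancing it against the boundary term and against $-c_3\eps\log\de_i$ yields a nondegenerate critical point with $\de_1\gg\de_2$; here $u^{+}\approx PU_{\de_1,\xi_1}$ and $u^{-}\approx PU_{\de_2,\xi_1}$ both blow up at $Q_N$, so $v_\eps^{+}$ and $v_\eps^{-}$ concentrate on the same sphere $\Gamma=\Gamma_N$. (Theorem~\ref{1.1} is obtained from the same scheme with all $\sigma_i=+1$: one positive bubble at $Q_N$ for (i), two positive bubbles at $Q_N$ and $Q_S$ for (ii).)

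\emph{Main obstacle.} The delicate part is Step 3 in this nonstandard regime in which the blow--up point runs to the boundary: one must establish the $C^{1}$ expansion \eqref{Jexp} \emph{uniformly} as $\xi_i\to\partial\mathcal D$ — where the Robin function of $\mathcal D$ blows up, so the expansion must be carried out in the rescaled variables $d_i\sim\de_i^{(N-2)/(N-1)}$ — and then show that the finite dimensional reduced problem has critical points of each prescribed type, the genuinely subtle cases being the sign changing tower (ii), where the balance between tower interaction, boundary repulsion and the subcriticality parameter is tight, and (iii), where a second solution must be produced. Keeping the linear theory uniform up to the poles and the symmetry axis is the remaining, more technical, point, and is handled by transferring the relevant estimates to the regular problem in $\rr^{2m}$ through the $O(m)\times O(m)$--symmetry.
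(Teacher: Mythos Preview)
Your overall strategy---reduce via \cite{PS} to the almost-critical problem in $\rr^{m+1}$ and then run a Ljapunov--Schmidt argument with bubbles centred on the axis approaching the inner boundary---is exactly the paper's, and your treatment of item~(i) (two opposite-sign bubbles near $Q_N,Q_S$) matches Theorem~\ref{main}(iii).

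For item~(ii), however, the paper does \emph{not} build a tower with $\de_2\ll\de_1$. It builds a \emph{cluster}: two bubbles of the \emph{same} order $\de_1,\de_2\sim\eps^{(n-1)/(n-2)}$ at two \emph{distinct} nearby points $\xi_i=(1+\eps t_i)\xi_0$, $t_1\neq t_2$, both approaching $\xi_0$. The interaction then scales like $(\de_1\de_2)^{(n-2)/2}/|\xi_1-\xi_2|^{n-2}\sim\eps$, which is precisely the order of the other terms of the reduced energy (cf.\ Proposition~\ref{pro4}(ii)), and a stable minimum of the explicit function $\Phi(d_1,d_2,t_1,t_2)$ gives the solution. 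Your tower ansatz is a genuinely different construction; it may well work, but you would have to redo the reduced-energy expansion in that regime (with the blow-up point simultaneously drifting to the boundary), and the balance $(\de_2/\de_1)^{(n-2)/2}$ versus boundary repulsion versus $\eps\log\de_i$ is not the one the paper analyses. (Also, your citation of \cite{PS} for towers is misplaced; \cite{PS} is the reduction paper.)

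For item~(iii) there is a real gap. With only two bubbles at $Q_N$ and $Q_S$ the interaction is of order $\de^{n-2}\sim\eps^{\,n-1}=o(\eps)$ and the reduced energy (Proposition~\ref{pro4}(i)) decouples into two identical one-bubble problems with a \emph{unique} stable minimum; no min--max or degree argument on that reduced functional will manufacture a second critical point at the same scale. The paper instead uses a \emph{four}-bubble ansatz, $PU_{\de_1,\xi_1}-PU_{\de_2,\xi_2}+\lambda\big(PU_{\de_1,-\xi_1}-PU_{\de_2,-\xi_2}\big)$ with $\lambda=\pm1$ (two sign-changing clusters, one near each pole, related by the reflection $x_n\mapsto-x_n$), and the two values of $\lambda$ yield the two solutions of Theorem~\ref{main}(v); these are what give Theorem~\ref{1.2}(iii). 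Your route does not reach this conclusion.
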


 We remark that the exponent ${(m+1)+2\over(m+1)-2}-\eps$ which is almost critical in dimension $(m+1)$ is obviously supercritical for problem \eqref{prob2}.

 \medskip
 To prove our results we use the reduction method introduced in
  \cite{PS} which allows to transform symmetric solutions to \eqref{prob2} into symmetric solutions of a similar nonhomogeneous problem in an annulus $\mathcal D\subset \rr^{m+1}.$ This method was inspired by the paper \cite{RS} where a reduction approach was used to pass from a singularly perturbed problem in an annulus in $\rr^4$ to a singularly perturbed problem in an annulus in $\rr^3.$

  More precisely let us consider the annulus  $\mathcal D\subset \rr^{m+1} $ $\mathcal{D} :=\{x\in \rr^{m+1}\ :\ {a^2/2}< |y|< {b^2/2}\} ,$
  and, write a point $y\in \rr^{2m}$ as
    $y=(y_1,y_2)$, $y_i\in\rr^m,$ $i=1,2.$ Then we consider functions $v$ in $\mathcal A\subset \rr^{2m}$ which are radially symmetric in $y_1$ and $y_2$, i.e. $v(y)=w(|y_1|,|y_2|)$ and functions $u$ in $\mathcal D\subset \rr^{m+1}$ which are radially symmetric about the $x_{m+1}-$axis, i.e. $u(x)=h(|x|,\varphi)$
    with $\varphi=\arccos \({x\over|x|,\underline e_{m+1}}\)$ where $\underline e_{m+1}=(0,\dots,0,1).$ We also set
    $$X=\left\{v\in C^{2,\alpha}(\overline A)\ :\ \hbox{$v$ is radially symmetric}\right\}$$
  $$Y=\left\{u\in C^{2,\alpha}(\overline D)\ :\ \hbox{$u$ is axially symmetric}\right\}.$$
  Then, as corollary of Theorem 1.1 of \cite{PS} we have
  \begin{proposition}\label{prop1.1}
  There is a bijective correspondence $h$ between solutions $v$ of \eqref{prob2} in $X$ and solutions $u=h(v)$ in $Y$ of the following reduced problem
  \begin{equation}
 \label{prob3}
 -\Delta u={1\over  2|x|}|u|^{p_\eps-1 } u \quad \hbox{in}\ \mathcal D\subset \rr^{m+1},\qquad u=0\quad \hbox{on}\ \partial\mathcal D.
 \end{equation}
    \end{proposition}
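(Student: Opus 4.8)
The plan is to obtain the Proposition as a direct specialization of the reduction theorem \cite[Theorem 1.1]{PS}, whose content I first recall in the concrete form needed here, and then to check that problem \eqref{prob2} on the annulus $\mathcal A$ meets its hypotheses. Write $y=(y_1,y_2)\in\rr^m\times\rr^m$ and put $s=|y_1|$, $t=|y_2|$, so that a function $v\in X$ is determined by $w=w(s,t)$ with $\Delta_{\rr^{2m}}v=w_{ss}+\tfrac{m-1}{s}w_s+w_{tt}+\tfrac{m-1}{t}w_t$. Write $x=(x',x_{m+1})\in\rr^m\times\rr$ and put $\rho=|x'|$, $\xi=x_{m+1}$, so that $u\in Y$ is determined by $g=g(\rho,\xi)$ with $\Delta_{\rr^{m+1}}u=g_{\rho\rho}+\tfrac{m-1}{\rho}g_\rho+g_{\xi\xi}$. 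The reduction is induced by the map $(s,t)\mapsto(\rho,\xi)=(st,\tfrac12(s^2-t^2))$, under which $|x|=\sqrt{\rho^2+\xi^2}=\tfrac12(s^2+t^2)=\tfrac12|y|^2$, so that $\mathcal A$ is carried onto $\mathcal D$ and $\partial\mathcal A$ onto $\partial\mathcal D$; one defines $h(v)=u$ by requiring $u$ and $v$ to take equal values at corresponding points.

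The first substantive step is the identity
\begin{equation*}
\Delta_{\rr^{2m}}v=2|x|\,\Delta_{\rr^{m+1}}u ,
\end{equation*}
obtained by a direct computation after passing to the variables $(s^2,t^2)$ on one side and $(\rho^2,\xi)$ on the other (this is precisely the computation behind \cite[Theorem 1.1]{PS}). Granting it, $v$ solves $-\Delta v=|v|^{p_\eps-1}v$ in $\mathcal A$ if and only if $u=h(v)$ solves $-\Delta u=\frac1{2|x|}|u|^{p_\eps-1}u$ in $\mathcal D$, i.e.\ \eqref{prob3}; since the nonlinearity $|v|^{p_\eps-1}v$ carries no dependence on $y$, no further spatial weight is produced and the constant is exactly $\frac1{2|x|}$, which is also the reason $\mathcal D$ is normalized as $\{a^2/2<|x|<b^2/2\}$.

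The second step is to verify that $h$ is a bijection between the Hölder classes $X$ and $Y$ preserving signs and the homogeneous Dirichlet condition. Away from the coordinate subspaces $\{y_1=0\}\cup\{y_2=0\}$ — which correspond to the poles $\varphi\in\{0,\pi\}$, i.e.\ the $x_{m+1}$-axis — the map $(s,t)\mapsto(\rho,\xi)$ is a smooth diffeomorphism between the corresponding open subsets of $\mathcal A$ and $\mathcal D$, both bounded away from the origin, so there the claim is immediate. At the degenerate locus one uses that a $C^{2,\alpha}$ function on $\mathcal A$ radially symmetric in $y_1$ and in $y_2$ is in fact a $C^{2,\alpha}$ function of $(|y_1|^2,|y_2|^2)=(s^2,t^2)$, while a $C^{2,\alpha}$ axially symmetric function on $\mathcal D$ is a $C^{2,\alpha}$ function of $(|x'|^2,x_{m+1})=(\rho^2,\xi)$; since $\rho^2=s^2t^2$ and $\xi=\tfrac12(s^2-t^2)$ set up a smooth bijection between $\{(s^2,t^2):s,t\ge0\}$ and its image, with inverse $s^2=|x|+x_{m+1}$ and $t^2=|x|-x_{m+1}$ (smooth because $|x|\ge a^2/2>0$ on $\mathcal D$), the two Hölder classes match exactly. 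Finally, $h$ is realized pointwise through a diffeomorphism, hence it preserves signs: $v>0$ in $\mathcal A$ iff $h(v)>0$ in $\mathcal D$, and $h(v)^{\pm}=h(v^{\pm})$ on the corresponding regions; together with $\partial\mathcal A\leftrightarrow\partial\mathcal D$ this gives the asserted bijective correspondence, which is all that is needed in order to later perform a Ljapunov--Schmidt reduction for \eqref{prob3}.

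The step I expect to be the main obstacle is precisely this last one: ensuring that the reduction is an honest bijection at the level of the spaces $X$ and $Y$ — that no regularity is created or destroyed across the degenerate coordinate locus, and that weak solutions of the degenerate-elliptic, singularly weighted equation \eqref{prob3} are classical elements of $Y$. Since all of this is already established in \cite[Theorem 1.1]{PS}, in the present setting the argument reduces to checking that \eqref{prob2}, the annulus $\mathcal A$, and the symmetry classes $X$ and $Y$ fall within the scope of that theorem, which they do.
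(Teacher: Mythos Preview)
Your argument is correct and follows the same route as the paper: the Proposition is stated there purely as a corollary of \cite[Theorem~1.1]{PS} with no further proof, and what you have written is precisely the content of that reduction (the change of variables $(\rho,\xi)=(st,\tfrac12(s^2-t^2))$, the identity $\Delta_{\rr^{2m}}v=2|x|\,\Delta_{\rr^{m+1}}u$, and the regularity/bijection check across the axis). Nothing additional is needed.
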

As a consequence of this result we can obtain solutions of problem \eqref{prob2} by constructing axially symmetric solutions of the lower-dimensional
problem \eqref{prob3}. This has the immediate advantage of transforming the supercritical problem \eqref{prob2} into the subcritical problem \eqref{prob3} if the exponent
$p_\eps$ is taken as  $ {(m+1)+2\over(m+1)-2}-\eps.$   Indeed we will prove Theorem \ref{1.1} and Theorem \ref{1.2} by constructing axially symmetric solutions of \eqref{pro3}, positive or sign changing, which blow-up and concentrate in points of the annulus $\mathcal D\subset \rr^{m+1}.$ These solutions will give rise to solutions of \eqref{prob2} concentrating on $(m-1)-$dimensional spheres, because, as a consequence of the proof of Theorem 1.1 of \cite{PS} and of Remark 3.1 of \cite{PS} it holds
\begin{proposition}\label{prop1.2}
If $u_\eps$ is an axially symmetric solution of \eqref{prob2} concentrating, as $\eps\to0$, on a point $\xi$ which belongs to the $x_{(m+1)}-$axis, i.e. $\xi=(0,\dots,0,t)$
for $t\in\rr\setminus\{0\},$ then the corresponding solution $v_\eps=h^{-1}(u_\eps)$ concentrates on a  $(m-1)-$dimensional sphere in $\rr^{2m}.$
\end{proposition}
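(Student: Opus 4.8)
The plan is to use the explicit form of the correspondence $h$ coming from Theorem~1.1 of \cite{PS} and to transport the concentration through it. Recall that, writing $y=(y_1,y_2)\in\rr^m\times\rr^m$ and $s=|y_1|$, $r=|y_2|$, the correspondence is induced by a quadratic (Hopf--type) map $\Phi:\overline{\mathcal A}\to\overline{\mathcal D}$ characterised, in these variables, by
$$
|\Phi(y)|=\tfrac12\big(s^2+r^2\big),\qquad \big(\Phi(y)\big)_{m+1}=\tfrac12\big(s^2-r^2\big),\qquad \big|\big(\Phi(y)\big)'\big|=s\,r ,
$$
where $x=(x',x_{m+1})\in\rr^m\times\rr$; equivalently, in the coordinates $(|x|,\varphi)$ of $Y$ (radius and polar angle from the $x_{m+1}$--axis) one has $s=\sqrt{2|x|}\,\cos\tfrac{\varphi}{2}$ and $r=\sqrt{2|x|}\,\sin\tfrac{\varphi}{2}$. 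A direct computation with the Laplacian --- a conformal change in the two variables $(s,r)$ together with the radial terms $\tfrac{m-1}{s}\partial_s$ and $\tfrac{m-1}{r}\partial_r$ --- shows that $v:=h^{-1}(u)=u\circ\Phi$ solves \eqref{prob2} exactly when $u$ solves \eqref{prob3}, the weight $\tfrac{1}{2|x|}$ there arising from the factor $2|x|$ picked up by the transformed Laplacian. Moreover the induced map $(s,r)\mapsto(|x|,\varphi)$ is a homeomorphism from the quarter--annulus $\{\,s,r\ge0,\ a^2<s^2+r^2<b^2\,\}$ onto the rectangle $\{\,a^2/2<|x|<b^2/2,\ 0\le\varphi\le\pi\,\}$, a diffeomorphism on the interior, while $\Phi$ collapses the edges $\{y_2=0\}$ and $\{y_1=0\}$ of $\mathcal A$ onto the positive and the negative $x_{m+1}$--axis respectively.

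The first step is to determine the fibre of $\Phi$ over an axis point. Let $\xi=(0,\dots,0,t)\in\mathcal D$ with $t\ne0$, so that $a^2/2<|t|<b^2/2$. Then $\Phi(y)=\xi$ forces $\tfrac12(s^2+r^2)=|t|$ and $\tfrac12(s^2-r^2)=t$, hence $r=0$, $s=\sqrt{2|t|}$ if $t>0$, and $s=0$, $r=\sqrt{2|t|}$ if $t<0$. Thus $\Phi^{-1}(\xi)=\Gamma$, where $\Gamma=\{\,(y_1,0):|y_1|=\sqrt{2|t|}\,\}\subset\rr^m\times\{0\}$ if $t>0$ and $\Gamma=\{\,(0,y_2):|y_2|=\sqrt{2|t|}\,\}\subset\{0\}\times\rr^m$ if $t<0$: a round $(m-1)$--dimensional sphere of radius $\sqrt{2|t|}\in(a,b)$ inside $\rr^{2m}$. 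The axis assumption is essential here: over a point with $|x'|\ne0$ the fibre is still $(m-1)$--dimensional but is no longer a round sphere in a coordinate subspace; and, since two axis points of opposite sign give spheres sitting in the two orthogonal factors $\rr^m\times\{0\}$ and $\{0\}\times\rr^m$, this is also what lies behind the orthogonal spheres appearing in Theorems~\ref{1.1} and \ref{1.2}.

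The second step is to transport the concentration through $v_\eps=u_\eps\circ\Phi$. Being continuous between compact sets, $\Phi$ is closed, so for every tubular neighbourhood $\mathcal W$ of $\Gamma$ there is a neighbourhood $\mathcal U$ of $\xi$ with $\Phi^{-1}(\mathcal U)\subset\mathcal W$; together with $u_\eps\to0$ in $C^1_{\mathrm{loc}}(\overline{\mathcal D}\setminus\{\xi\})$ and $\|v_\eps\|_\infty=\|u_\eps\|_\infty\to\infty$ this gives $v_\eps\to0$ uniformly on $\overline{\mathcal A}\setminus\mathcal W$ while $v_\eps$ stays large in $\mathcal W$. To recover the bubble profile transverse to $\Gamma$ I would parametrise $\mathcal W$ by $(\sigma,\tau,y_2)$ with $\sigma\in\Gamma$, $\tau=|y_1|-\sqrt{2|t|}$ and $y_2\in\rr^m$ small (for $t>0$; symmetrically for $t<0$). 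Since $v_\eps\in X$ it depends only on $(|y_1|,|y_2|)=(\sqrt{2|t|}+\tau,|y_2|)$, hence only on $(\tau,y_2)$, uniformly in $\sigma$; on the other hand, by the computation of the first step, $|\Phi(y)-\xi|^2=2|t|\,(1+o(1))\big(\tau^2+|y_2|^2\big)=2|t|\,(1+o(1))\,\mathrm{dist}(y,\Gamma)^2$, and near $\xi$ the function $u_\eps$ is a radial bubble at some scale $\de_\eps\to0$. Feeding the convergence $\de_\eps^{(m-1)/2}u_\eps(\xi+\de_\eps\,\cdot\,)\to U$ ($U$ the standard bubble in $\rr^{m+1}$) into $v_\eps=u_\eps\circ\Phi$ yields, uniformly in $\sigma\in\Gamma$,
$$
\de_\eps^{(m-1)/2}\,v_\eps\big(\sigma+\de_\eps\,\zeta\big)\ \longrightarrow\ U\big(\sqrt{2|t|}\,|\zeta|\big)\qquad\text{in }C^1_{\mathrm{loc}},
$$
$\zeta$ running over the $(m+1)$--dimensional normal space of $\Gamma$; this is exactly the statement that $v_\eps$ blows up and concentrates on the $(m-1)$--sphere $\Gamma\subset\rr^{2m}$.

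I expect essentially all of the real work to sit in this last step: one must control the pull--back of an $(m+1)$--dimensional bubble through the map $\Phi$, which collapses the whole sphere $\Gamma$ to the single point $\xi$ and is therefore singular along $\Gamma$, check that the limiting profile is attained uniformly along $\Gamma$, and verify that the rescaled functions remain genuine solutions with the claimed asymptotics. This is precisely what is carried out in the proof of Theorem~1.1 and in Remark~3.1 of \cite{PS}, which I would invoke rather than reprove; granting those facts, the proposition follows.
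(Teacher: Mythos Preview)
Your proposal is correct and follows the same route as the paper: both reduce the claim to the change of variables from \cite{PS}, observing that the fibre of the Hopf--type map over a point on the $x_{m+1}$--axis is an $(m-1)$--sphere and then invoking Theorem~1.1 and Remark~3.1 of \cite{PS} for the transport of concentration. The paper's own argument is in fact just the one--sentence reference to \cite{PS} immediately following the statement; your write--up makes the fibre computation and the pull--back of the bubble explicit, which is more than what the paper provides, but the underlying idea is identical.
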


This is because, by symmetry considerations and by the change of variable performed in \cite{PS} to prove Theorem 1.1 any point $\xi$ on the $x_{(m+1)}-$axis in $\mathcal D\subset\rr^{m+1}$ is mapped into a $(m-1)-$dimensional sphere in $\mathcal A\subset\rr^{2m}.$ We refer to \cite{PS} for all details.

Thus let $\Omega:=\{x\in \rr^n\ :\ 1< |x|< r\} $   be an annulus   in $\rr^n,$  $n \ge3,$ and consider the problem
   \begin{equation}
 \label{p1}
 -\Delta u={1\over  2|x|}|u|^{p-1-\epsilon} u \quad \hbox{in}\ \Omega,\qquad u=0\quad \hbox{on}\ \partial\Omega,
 \end{equation}
 where   $p={n+2\over n-2}  $ and $\eps$ is a small positive parameter.
 Let
 $U_{\delta,\xi}(x):=\alpha_n{\delta^{n-2\over2}\over(\delta^2+|x-\xi|^2){n-2\over2}}$ with $ \delta>0$ and $ x,\xi\in\rr^n,$
 be the solutions to the critical problem $-\Delta u=u^p$ in $\rr^n.$ Here $ \alpha_n:=[n(n-2)]^{n-2\over4}.$
We have
\begin{theorem}
\label{main} There exists $\epsilon _{0}>0$ such that, for each $\epsilon
\in (0,\epsilon _{0}),$ problem \eqref{p1} has
\begin{itemize}
\item[(i)] an axially symmetric positive
solution $u_{\epsilon }$ with one simple positive blow-up point which converge to $\xi_0$ as $\eps$ goes to zero, i.e.
\begin{equation*}
u_{\epsilon }(x)=U_{{\delta  }_{\epsilon },{\xi  }_{\epsilon }}(x)  +o(1)\ \quad \hbox{in}\ H^1_0(\Omega),
\end{equation*}%
with
\begin{equation*}
\epsilon ^{-{\frac{n-1}{n-2}}}{\delta  }_{\epsilon }\rightarrow
d >0,\quad  {\xi  }_{\epsilon }\rightarrow \xi_0 ;
\end{equation*}%
\item[(ii)] an axially symmetric positive
solution $u_{\epsilon }$ with two simple positive blow-up points which converge to $\xi_0$ and $-\xi_0$ as $\eps$ goes to zero, i.e.
\begin{equation*}
u_{\epsilon }(x)=U_{ \delta   _ \epsilon  , \xi  _ \epsilon  }(x)+U_{ \delta _ \epsilon  , -\xi _ \epsilon  }(x)+o(1),
\end{equation*}%
with
\begin{equation*}
\epsilon ^{-{\frac{n-1}{n-2}}}{\delta  }_{\epsilon }\rightarrow
d >0,\quad  {\xi  }_{\epsilon }\rightarrow \xi_0 ;
\end{equation*}%
\item[(iii)] an axially symmetric sign-changing solutions
solution $u_{\epsilon }$ with one  simple positive and one simple negative blow-up points which converge to $\xi_0$ and $-\xi_0$ as $\eps$ goes to zero, i.e.
\begin{equation*}
u_{\epsilon }(x)=U_{ \delta   _ \epsilon  , \xi  _ \epsilon  }(x)-U_{ \delta _ \epsilon  , -\xi _ \epsilon  }(x)+o(1),
\end{equation*}%
with
\begin{equation*}
\epsilon ^{-{\frac{n-1}{n-2}}}{\delta  }_{\epsilon }\rightarrow
d >0,\quad  {\xi  }_{\epsilon }\rightarrow \xi_0 ;
\end{equation*}%
\item[(iv)] an axially symmetric sign-changing solutions
solution $u_{\epsilon }$ with one  double nodal blow-up point  which converge to $\xi_0$  as $\eps$ goes to zero, i.e.
\begin{equation*}
u_{\epsilon }(x)=U_{{\delta _{1}}_{\epsilon },{\xi _{1}}_{\epsilon }}(x)-U_{{%
\delta _{2}}_{\epsilon },{\xi _{2}}_{\epsilon }}(x)+o(1),
\end{equation*}%
with
\begin{equation*}
\epsilon ^{-{\frac{n-1}{n-2}}}{\delta _{i}}_{\epsilon }\rightarrow
d_{i}>0,\quad  {\xi _{i}}_{\epsilon }\rightarrow \xi_0
\end{equation*}%
for $i=1,2.$
\item[(v)] two  axially symmetric sign-changing solutions
solution $u_{\epsilon }$ with two  double nodal blow-up points  which converge to $\xi_0$ and $-\xi_0$ as $\eps$ goes to zero, i.e.
\begin{equation*}
u_{\epsilon }(x)=\[U_{{\delta _{1}}_{\epsilon },{\xi _{1}}_{\epsilon }}(x)-U_{{%
\delta _{2}}_{\epsilon },{\xi _{2}}_{\epsilon }}(x)\]+\[U_{{-\delta _{1}}_{\epsilon },{-\xi _{1}}_{\epsilon }}(x)-U_{{%
-\delta _{2}}_{\epsilon },{-\xi _{2}}_{\epsilon }}(x)\]+o(1)
\end{equation*}%
and
\begin{equation*}
u_{\epsilon }(x)=\[U_{{\delta _{1}}_{\epsilon },{\xi _{1}}_{\epsilon }}(x)-U_{{%
\delta _{2}}_{\epsilon },{\xi _{2}}_{\epsilon }}(x)\]-\[U_{{-\delta _{1}}_{\epsilon },{-\xi _{1}}_{\epsilon }}(x)-U_{{%
-\delta _{2}}_{\epsilon },{-\xi _{2}}_{\epsilon }}(x)\]+o(1)
\end{equation*}
with
\begin{equation*}
\epsilon ^{-{\frac{n-1}{n-2}}}{\delta _{i}}_{\epsilon }\rightarrow
d_{i}>0,\quad  {\xi _{i}}_{\epsilon }\rightarrow \xi_0
\end{equation*}%
for $i=1,2.$
\end{itemize}
\end{theorem}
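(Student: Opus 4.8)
The plan is to prove Theorem~\ref{main} via a Ljapunov--Schmidt finite dimensional reduction, following the by-now classical scheme for almost-critical Lane--Emden problems (Rey, Bahri--Li--Rey, Musso--Pistoia, Esposito--Pistoia--Musso), adapted to the weighted nonlinearity $\frac{1}{2|x|}|u|^{p-1-\eps}u$ on the annulus $\Omega$.

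\textbf{Step 1: ansatz and projected bubbles.} Work in $H^1_0(\Omega)$ with the inner product $\langle u,v\rangle=\int_\Omega \nabla u\cdot\nabla v$ and the critical-exponent norm on the dual side. For each bubble define the projection $PU_{\delta,\xi}$ as the solution of $-\Delta PU_{\delta,\xi}=U_{\delta,\xi}^p$ in $\Omega$ with zero boundary data; since $\xi$ stays in the interior and $\delta\to0$, standard estimates give $PU_{\delta,\xi}=U_{\delta,\xi}-\alpha_n\delta^{(n-2)/2}H(\cdot,\xi)+\text{l.o.t.}$, where $H$ is the regular part of the Green's function of $\Omega$. For case~(i) the approximate solution is $W=PU_{\delta,\xi}$; for (ii) it is $PU_{\delta,\xi}+PU_{\delta,-\xi}$; for (iii) $PU_{\delta,\xi}-PU_{\delta,-\xi}$; for (iv) $PU_{\delta_1,\xi}-PU_{\delta_2,\xi}$ (the ``tower''/double nodal bubble), and for (v) the superposition of two such towers at $\pm\xi$, with the two sign choices. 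Throughout, $\xi$ is constrained to the $x_{n+1}$-axis, $\xi=(0,\dots,0,t)$, and the concentration parameters are scaled as $\delta=\eps^{(n-1)/(n-2)}d$; reflection symmetry $x_{n+1}\mapsto-x_{n+1}$ (plus full rotational symmetry about that axis) is imposed to keep everything axially symmetric and to reduce the reduced problem to one (or two) real parameters $d$ (resp.\ $d_1,d_2$) and the scalar $t$.

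\textbf{Step 2: linear theory and the finite-dimensional reduction.} Decompose $u=W+\phi$ with $\phi$ orthogonal (in $H^1_0$) to the kernel elements $\partial_\delta PU$, $\partial_\xi PU$ of the linearized operator $L:=-\Delta-(p-\eps)\tfrac{1}{2|x|}|W|^{p-1-\eps}$. The key analytic ingredient is the a priori estimate $\|L\phi\|\gtrsim\|\phi\|$ on the orthogonal complement, uniformly as $\eps\to0$; because the weight $\frac1{2|x|}$ is smooth and bounded above and below on $\overline\Omega$, the nondegeneracy of the bubble (the only bounded solutions of $-\Delta\psi=pU^{p-1}\psi$ on $\rr^n$ are combinations of $\partial_\delta U,\partial_{\xi_i}U$) carries over verbatim, and for the towers one also uses the nondegeneracy of the single bubble at each scale together with the scale separation $\delta_1/\delta_2\to0$. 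A contraction mapping argument then solves the auxiliary equation, producing for each admissible $(d,t)$ (resp.\ $(d_1,d_2,t)$) a unique small $\phi=\phi_\eps(d,t)$, smooth in the parameters, with $\|\phi_\eps\|=O(\eps|\log\eps|)$ or a similar higher-order bound, so that $W+\phi_\eps$ solves \eqref{p1} iff the parameters are a critical point of the reduced energy $\mathcal F_\eps(d,t):=J_\eps(W+\phi_\eps)$, where $J_\eps$ is the energy functional associated with \eqref{p1}.

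\textbf{Step 3: energy expansion and the reduced problem.} Expand $\mathcal F_\eps$ using the standard integral estimates. For a single bubble one gets, up to additive constants and with $c_i>0$ explicit,
\begin{equation*}
\mathcal F_\eps(d,t)=c_0+\eps\,\Big[c_1\log\frac{1}{d}+c_2\,\frac{H(\xi,\xi)}{|\xi|}\,d^{n-2}\cdot\eps^{?}\;+\;c_3\log\frac{1}{|\xi|}\Big]+\text{h.o.t.},
\end{equation*}
where the precise powers of $\eps$ come from $\delta=\eps^{(n-1)/(n-2)}d$; the leading variable part is of the form $\eps(A\log\tfrac1d + B d^{n-2})$ in $d$, which has a strict interior minimum at an explicit $d=d(t)>0$, and the dependence on $t$ enters through the boundary-regular part $H$ and the weight $\tfrac1{|\xi|}$, contributing a function of $t$ on $(1,r)$ that is coercive towards the boundary of the annulus and hence has an interior critical point $\xi_0=(0,\dots,0,t_0)$. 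Locating this critical point and checking its stability (so that it survives the higher-order perturbation) is the crux. For the multi-bubble cases (ii)--(v) the expansion gains interaction terms: for two like-sign bubbles at $\pm\xi$ a positive interaction $+c\,(\delta/|\xi|)^{n-2}$, for opposite signs a negative one, and for towers the intra-tower interaction $\sim(\delta_2/\delta_1)^{(n-2)/2}$; in each case one arranges, by symmetry, that the reduced functional is a sum of a ``self-energy'' part (giving the $d_i$'s) plus a location part (giving $t_0$), and one extracts critical points either by direct minimization/maximization in $d_i$ followed by a min–max or topological argument in $t$, or — as is cleanest here — by showing the reduced functional has a strict local minimum/saddle that is stable under $C^0$-perturbations.

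\textbf{Main obstacle.} The routine parts are the linear theory and the contraction (the weight is harmless) and the existence of the optimal $d_i(t)$. The genuinely delicate point is the analysis of the reduced one-variable problem in the location parameter $t$: one must compute the $t$-dependence of the Robin-type function $t\mapsto H(\xi,\xi)/|\xi|^{\gamma}$ on the annulus precisely enough to see that it has a nondegenerate (or at least topologically stable) interior critical point, and — for the sign-changing and tower cases — control the competition between the several interaction terms so that the full reduced functional still has a stable critical point giving exactly the configurations claimed in (i)--(v). This is where the explicit geometry of the annulus $\Omega$ and the weight $\tfrac1{2|x|}$ must be used, and it is the step I expect to absorb most of the work.
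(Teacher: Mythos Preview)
Your overall scheme (Ljapunov--Schmidt reduction, projected bubbles, linear nondegeneracy, reduced energy) is the right machinery and matches the paper's approach. However, your proposal misidentifies the concentration mechanism in a way that would make Steps~1 and~3 fail as written.

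\textbf{The limit point $\xi_0$ lies on the boundary, not in the interior.} In the paper $\xi_0=(0,\dots,0,1)$, which sits on the \emph{inner} component $\{|x|=1\}$ of $\partial\Omega$. The prescribed scaling $\delta_\eps\sim\eps^{(n-1)/(n-2)}$ is exactly the boundary-concentration rate, not the interior rate $\eps^{1/(n-2)}$. Your Step~3 treats $t\in(1,r)$ as an interior location parameter and looks for an interior critical point of $t\mapsto H(\xi,\xi)/|\xi|^\gamma$; with that ansatz the Robin term $\delta^{n-2}H(\xi,\xi)=O(\eps^{n-1})$ is lower order than the $\eps\log d$ term, no balance occurs, and the stated $\delta$-scaling cannot be recovered. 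The correct ansatz (as in the paper) is $\xi=(1+\eps t)\xi_0$ with $t>0$, so that $\mathrm{dist}(\xi,\partial\Omega)\sim\eps t$ and $H(\xi,\xi)\sim(2\eps t)^{-(n-2)}$; then $\delta^{n-2}H(\xi,\xi)\sim\eps(d/2t)^{n-2}$. The weight contributes the linear term: $1/|\xi|=1/(1+\eps t)=1-\eps t+o(\eps)$, yielding a $+c_5 t$ in the reduced energy. The reduced function becomes $\Phi(d,t)=c_4(d/2t)^{n-2}+c_5 t-c_6\ln d$, which is coercive on $(0,\infty)^2$ and has a stable interior minimum. Thus the ``main obstacle'' you single out (critical-point analysis of a Robin-type function on $(1,r)$) is not the actual issue; the real point is that the weight $1/(2|x|)$ attracts bubbles toward $|x|=1$ and the Robin function repels them, with the balance struck at distance $\sim\eps$.

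\textbf{Cases (iv)--(v) are clusters, not towers.} You model the ``double nodal blow-up point'' as a tower with scale separation $\delta_1/\delta_2\to0$ and interaction $(\delta_2/\delta_1)^{(n-2)/2}$. In the paper both bubbles have the \emph{same} scale $\delta_i\sim\eps^{(n-1)/(n-2)}d_i$ and sit at two distinct nearby points $\xi_i=(1+\eps t_i)\xi_0$ with $t_1<t_2$; the interaction is $(\delta_1\delta_2)^{(n-2)/2}\bigl(|t_1-t_2|^{-(n-2)}-|t_1+t_2|^{-(n-2)}\bigr)$, again of order $\eps$. The linear theory therefore does not require a tower-type scale separation argument; it is the standard multi-bubble estimate with well-separated (at scale $\eps$) centers. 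Your contraction and nondegeneracy steps would need to be redone in this cluster framework, and the reduced functional in $(d_1,d_2,t_1,t_2)$ has the explicit form in the paper's Proposition~\ref{pro4}(ii), whose minimum is again straightforward to locate.
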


Obviously Theorem \ref{1.1} and Theorem \ref{1.2} derive from Theorem \ref{main} for $n=m+1$ using Proposition \ref{prop1.1} and Proposition \ref{prop1.2}.

The proof of Theorem
\ref{main} relies on a very well known Ljapunov-Schmidt
finite dimensional reduction. We omit many details on the finite dimensional reduction because
they can be found, up to some minor modifications, in the literature. In Section \ref{uno} we
write the approximate solution, we sketch the proof of the Ljapunov-Schmidt
procedure and we prove Theorem  \ref{main}. In Section \ref{due} we only
compute the expansion of the reduced energy, which is crucial in this framework. In the Appendix  we recall
some well known facts.

 \section{The Ljapunov-Schmidt procedure}\label{uno}
 We equip ${\rm H}^1_0(\Omega)$ with the inner product $(u,v)=\int\limits_\Omega \nabla u\nabla vdx$ and the corresponding norm $\|u\|^2= \int\limits_\Omega |\nabla u|^2dx .$ For $r\in[1,\infty)$ and $u\in{\rm L}^{r}(\Omega)$ we set $\|u\|_r^r= \int\limits_\Omega | u|^rdx .$

 Let us rewrite problem \eqref{p1} in a different way. Let $i^*:{\rm L}^{2n\over n-2}(\Omega)\to{\rm H}^1_0(\Omega)$ be the adjoint operator of the embedding
 $i:{\rm H}^1_0(\Omega) \hookrightarrow{\rm L}^{2n\over n-2}(\Omega),$ i.e.
 $$i^*(u)=v\quad \Leftrightarrow\quad \(v,\varphi\)=\int\limits_\Omega u(x)\varphi(x)dx\ \forall\ \varphi\in{\rm H}^1_0(\Omega).$$
  It is clear that there exists a positive constant $c$ such that
 $$\|i^*(u)\|\le c\|u\|^{2n\over n+2}\qquad  \forall\ u\in{\rm L}^{2n\over n+2}(\Omega).$$
 Setting $f_\eps(s):=|s|^{p-1-\eps}s$ and using the operator $i^*$, problem \eqref{p1} turns out to be equivalent to
 \begin{equation}
 \label{p2}
 u=i^*\[{1\over 2|x|}f_\eps(u)\],\quad u\in {\rm H}^1_0(\Omega).
 \end{equation}

 Let $U_{\delta,\xi}:=\alpha_n{\delta^{n-2\over2}\over(\delta^2+|x-\xi|^2)^{n-2\over2}},$ with $\alpha_n:=\[n(n-2)\]^{n-2\over4} $
 be the positive solutions to the limit problem
 $$ -\Delta u=u^p\ \hbox{in}\ \rr^n.$$
Set
$$\psi^0_{\delta,\xi}(x):={\partial U_{\delta ,\xi }\over \partial\delta }=\alpha_n{n-2\over2}\delta^{n-4\over2}{|x-\xi|^2-\delta^2\over(\delta^2+|x-\xi|^2)^{n/2}}$$
 and for any   $j= 1,\dots,n$
 $$\psi^j_{\delta,\xi}(x):={\partial U_{\delta ,\xi }\over \partial\xi_j }=\alpha_n(n-2)\delta^{n-2\over2}{ x_j-\xi_j\over(\delta^2+|x-\xi|^2)^{n/2}}.$$
It is well known that the space spanned by the $(n+1)$ functions $\psi^j_{\delta,\xi}$ is the set of the solution to the linearized problem
$$ -\Delta \psi=pU^{p-1}_{\delta,\xi}\psi\ \hbox{in}\ \rr^n.$$

 We also denote by $PW$ the projection onto ${\rm H}^1_0(\Omega)$ of a function $W\in D^{1,2}(\rr^n),$ i.e.
 $$\Delta PW=\Delta W\ \hbox{in}\ \Omega,\quad PW=0\ \hbox{on}\ \partial\Omega.$$

Set $\xi_0:=(0,\dots,0,1).$
  We look for two different types of solutions to problem \eqref{p2}.
  The solutions of the type   (i), (ii) and (iii) of Theorem \ref{main} will be of the form
  \begin{equation}
 \label{ans1}
 u_\eps= PU_{\delta ,\xi }+\lambda PU_{\mu ,\eta }+\phi
 \end{equation}
 where $\lambda \in\{-1,0,+1\}$ ($\lambda=0$ in case (i), $\lambda=+1$ in case (ii) and $\lambda=-1$ in case (iii)) and  the concentration parameters   are
 \begin{equation}
 \label{ans2}
 \mu =\delta \quad\hbox{and}\quad \delta :=\eps^{n-1\over n-2}d  \ \hbox{for some}\  d  >0
 \end{equation}
while the concentration points   satisfy
 \begin{equation}
 \label{ans3}
   \eta =-\xi \quad\hbox{and}\quad \xi =(1+\tau )\xi_0,   \ \hbox{with}\ \tau := \eps t ,\ t  >0.
 \end{equation}
 On the other hand, the solutions of the type (iv) and (v) of Theorem \ref{main} will be of the form
 \begin{equation}
 \label{ans11}
 u_\eps=PU_{\delta_1,\xi_1}-PU_{\delta_2,\xi_2}+\lambda\(PU_{\mu_1,\eta_1}-PU_{\mu_2,\eta_2}\)+\phi,
 \end{equation}
where $\lambda \in\{-1,0,+1\}$ ($\lambda=0$ in case (iv), $\lambda=+1$ in the first  case (v) and $\lambda=-1$ in the second case (v)) and  the concentration parameters are
  \begin{equation}\label{ans21}
 \mu_i=\delta_i\quad\hbox{and}\quad \delta_i:=\eps^{n-1\over n-2}d_i \quad\hbox{with}\quad d_i >0
 \end{equation}
while the concentration points are aligned along the $x_n-$axes   and satisfy
 \begin{equation}
 \label{ans31}
  \eta_i=-\xi_i\quad\hbox{and}\quad  \xi_i=(1+\tau_i)\xi_0 \ \hbox{with} \   \tau_i:= \eps t_i,\ t_i>0.
 \end{equation}

 Next, we introduce the configuration space $\Lambda$ where the concentration parameters and the concentration points lie. For solutions of type \eqref{ans1} we
 set $\di=d\in(0,+\infty)$ and $\ti=t\in(0,+\infty)$
 and so
 $$\Lambda:=\left\{(\di,\ti)\in (0,+\infty)\times(0,+\infty)\right\},$$
  while for solutions of type \eqref{ans11} we
 set $\di=(d_1,d_2)\in(0,+\infty)^2$ and $\ti=(t_1,t_2)\in(0,+\infty)^2$
 and so
 $$\Lambda:=\{(\di,\ti)\in(0,+\infty)^4\ :\ t_1<t_2\} .$$
 In each of these cases we write
 $$V_{\di,\ti}:=PU_{\delta ,\xi }+\lambda PU_{\mu ,\eta }\quad\hbox{or}\quad V_{\di,\ti}:= PU_{\delta_1,\xi_1}-PU_{\delta_2,\xi_2}+\lambda\(PU_{\mu_1,\eta_1}-PU_{\mu_2,\eta_2}\).$$

 The remainder term $\phi$ in both cases \eqref{ans1} and \eqref{ans11} belongs to a suitable   space   which we now define.

  We  introduce the spaces
 $$K_{\di,\ti}:={\rm span}\{P\psi^j_{\delta_i,\xi_i},\ P\psi^\ell_{\mu_\kappa,\xi_\kappa} \ :\ i,\kappa=1,2,\ j,\ell=0,1,\dots,n\} $$
(we agree that if $\lambda=0$ then  $K_{\di,\ti}$ is only generated by the $P\psi^j_{\delta_i,\xi_i}$'s) and
 $$K_{\di,\ti}^\perp:=\left\{\phi\in \mathcal H_\lambda\ :\ (\phi, \psi )=0\quad \forall\ \psi\in K_{\di,\ti}\right\},$$
 where  the space $\mathcal H_\lambda$ depends on $\lambda \in\{-1,0,+1\}$ and is defined by
  $$\mathcal H_0:=\{\phi\in  {\rm H}^1_0(\Omega)\ :\ \phi\ \hbox{is axially symmetric with respect to the $x_n$-axes }\},$$
  $$\mathcal H_{+1}:=\{\phi\in  \mathcal H_0\ :\ \phi(x_1,\dots,x_n)=\phi(x_1,\dots,-x_n\},$$
 $$\mathcal H_{-1}:=\{\phi\in  \mathcal H_0\ :\ \phi(x_1,\dots,x_n)=-\phi(x_1,\dots,-x_n\}.$$
  Then we introduce   the orthogonal projection operators
  $\Pi_{\di,\ti}$ and $ \Pi_{\di,\ti}^\perp $ in $H^1_0(\Omega),$ respectively.

As  usual for this reduction method, the approach to solve problem \eqref{p1} or \eqref{p2} will be to find a pair $(\di,\ti)$ and a function $\phi\in K_{\di,\ti}^\perp$ such that
\begin{equation}\label{es1}
\Pi_{\di,\ti}^\perp\left\{V_{\di,\ti}+\phi-i^*\[{1\over 2|x|}f_\eps\(V_{\di,\ti}+\phi\)\]\right\}=0
\end{equation}
and
\begin{equation}\label{es2}
\Pi_{\di,\ti} \left\{V_{\di,\ti}+\phi-i^*\[{1\over 2|x|}f_\eps\(V_{\di,\ti}+\phi\)\]\right\}=0
\end{equation}

First,   we shall find for any $(\di,\ti)$ and for small $\eps,$ a function $\phi\in K_{\di,\ti}^\perp$  such that \eqref{es1} holds.
To this aim we define a linear operator $L_{\di,\ti}:K_{\di,\ti}^\perp\to K_{\di,\ti}^\perp$ by
$$L_{\di,\ti}\phi:=\phi-\Pi_{\di,\ti}^\perp i^*\[ f'_0\(V_{\di,\ti}\)\phi\].$$

\begin{proposition}\label{pro1}
For any compact sets $\mathbf{C}$ in $\Lambda$ there exists $\eps_0,c>0$ such that for any $\eps\in(0,\eps_0)$ and for any $(\di,\ti)\in \mathbf{C}$
the operator $L_{\di,\ti}$ is invertible and
$$\|L_{\di,\ti}\phi\|\ge c\|\phi\|\ \quad\ \forall\ \phi\in K_{\di,\ti}^\perp.$$
\end{proposition}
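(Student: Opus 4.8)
The plan is to argue by contradiction, following the standard scheme for invertibility of the linearized operator in Ljapunov–Schmidt reductions (as in, e.g., the references cited in the excerpt). Suppose the estimate fails: then there are sequences $\eps_k\to0$, $(\di_k,\ti_k)\in\mathbf C$, and $\phi_k\in K_{\di_k,\ti_k}^\perp$ with $\|\phi_k\|=1$ and $\|L_{\di_k,\ti_k}\phi_k\|=:\|h_k\|\to0$. Writing out the definition of $L_{\di,\ti}$, this means
$$\phi_k-\Pi_{\di_k,\ti_k}^\perp i^*\bigl[f'_0(V_{\di_k,\ti_k})\phi_k\bigr]=h_k,\qquad h_k\to0\ \hbox{in}\ H^1_0(\Omega).$$
Since $\mathbf C$ is compact we may assume $(\di_k,\ti_k)\to(\di_\infty,\ti_\infty)\in\mathbf C$, so the concentration parameters $\delta_i$ behave like $\eps_k^{(n-1)/(n-2)}d_i$ with $d_i$ bounded away from $0$ and $\infty$, and the concentration points $\xi_i\to\xi_0$ (resp.\ $\pm\xi_0$) stay in the interior of $\Omega$ with mutual distances bounded below. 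The weight $\tfrac1{2|x|}$ is smooth and bounded above and below on $\overline\Omega$ (since $1\le|x|\le r$), so it plays no essential role and $f'_0(V_{\di,\ti})=p\,|V_{\di,\ti}|^{p-1}$ with $p=\tfrac{n+2}{n-2}$ the critical exponent.

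First I would establish the two routine ingredients. (a) A rescaling/concentration-compactness argument: test the equation against $\phi_k$ itself to get $1-\int_\Omega f'_0(V_{\di_k,\ti_k})\phi_k^2\,dx = (h_k,\phi_k)\to0$, so the ``mass'' of $\phi_k$ concentrates near the bubbles; then rescale around each bubble, $\tilde\phi_k(y):=\delta_{i,k}^{(n-2)/2}\phi_k(\delta_{i,k}y+\xi_{i,k})$, and pass to the limit. The limit $\tilde\phi$ solves the linearized limit equation $-\Delta\tilde\phi=pU_{1,0}^{p-1}\tilde\phi$ on $\rr^n$ with $\|\nabla\tilde\phi\|_{L^2}<\infty$, hence lies in $\mathrm{span}\{\psi^0_{1,0},\dots,\psi^n_{1,0}\}$ by the well-known nondegeneracy of the standard bubble recalled in the excerpt. (b) The orthogonality $\phi_k\in K_{\di_k,\ti_k}^\perp$, i.e.\ $(\phi_k,P\psi^j_{\delta_{i,k},\xi_{i,k}})=0$, passes to the limit after the same rescaling (using the well-known estimates $\|P\psi^j_{\delta,\xi}-\psi^j_{\delta,\xi}\|\to0$ and the near-orthogonality of bubbles centered at distinct points) and forces the coefficients of $\tilde\phi$ along all the $\psi^j_{1,0}$ to vanish, so $\tilde\phi\equiv0$ around every bubble.

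Next, I would upgrade this local vanishing to a global contradiction. From $\tilde\phi\equiv0$ one gets $\int_\Omega f'_0(V_{\di_k,\ti_k})\phi_k^2\,dx\to0$ (the integral away from the bubbles is $o(1)$ because $V_{\di_k,\ti_k}\to0$ in $L^\infty_{loc}$ away from the concentration points, and near each bubble it equals $\int p U_{1,0}^{p-1}\tilde\phi_k^2+o(1)\to0$). Plugging this back into the equation $\phi_k=\Pi_{\di_k,\ti_k}^\perp i^*[f'_0(V_{\di_k,\ti_k})\phi_k]+h_k$ and using the continuity of $i^*$ together with the Hölder/Sobolev bound
$$\bigl\|i^*[f'_0(V_{\di_k,\ti_k})\phi_k]\bigr\|\le c\,\|f'_0(V_{\di_k,\ti_k})\phi_k\|_{\frac{2n}{n+2}}\le c\,\|V_{\di_k,\ti_k}\|_{L^{2n/(n-2)}}^{p-1}\Bigl(\int_\Omega f'_0(V_{\di_k,\ti_k})\phi_k^2\Bigr)^{1/2}\to0$$
(the last step by Hölder with the measure $f'_0(V)\,dx$), we conclude $\|\phi_k\|\to0$, contradicting $\|\phi_k\|=1$. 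This proves the a priori estimate; invertibility of $L_{\di,\ti}$ then follows because it is a compact perturbation of the identity (the map $\phi\mapsto\Pi_{\di,\ti}^\perp i^*[f'_0(V_{\di,\ti})\phi]$ is compact, being a composition with the compact embedding), so injectivity plus Fredholm alternative gives surjectivity, and the norm bound gives the uniform lower bound on $L_{\di,\ti}$.

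The main obstacle is the concentration-compactness step (a): one must rule out that $\phi_k$ carries nontrivial mass on a scale intermediate between $\delta_{i,k}$ and $O(1)$, or that it leaks mass to $\partial\Omega$. Both are handled by the usual elliptic estimates—testing against suitable cut-offs and using that $V_{\di,\ti}$ and its derivatives decay away from the bubbles—but keeping track of the several bubbles in the ansatz \eqref{ans11} (with the symmetry constraints encoded in $\mathcal H_\lambda$) and of the slowly-varying weight $\tfrac1{2|x|}$ requires care. Since all of this is by now standard, I would only sketch it and refer to the literature, as the authors indicate.
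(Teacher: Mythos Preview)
Your proof is correct and follows precisely the standard contradiction-plus-rescaling scheme that the paper invokes by citing Lemma~1.7 of \cite{MP}; the paper gives no further details beyond that reference. Two minor remarks: the operator $L_{\di,\ti}$ as defined in the paper carries no weight $\tfrac{1}{2|x|}$, so that discussion is moot, and your final H\"older step is superfluous---once you have shown $\int_\Omega f'_0(V_{\di_k,\ti_k})\phi_k^2\,dx\to 0$, the identity from step~(a) already gives $1=o(1)$, which is the desired contradiction.
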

\begin{proof}

We argue as in Lemma 1.7 of \cite{MP}.

\end{proof}

Now, we are in position to solve equation \eqref{es1}.

\begin{proposition}\label{pro2}
For any compact sets $\mathbf{C}$ in $\Lambda$ there exists $\eps_0, c,\sigma>0$ such that for any $\eps\in(0,\eps_0)$ and  for any $(\di,\ti)\in \mathbf{C}$ there exists a unique $\phi ^\eps_{\di,\ti} \in K_{\di,\ti}^\perp $
such that
$$\Pi_{\di,\ti}^\perp\left\{V_{\di,\ti}+\phi^\eps_{\di,\ti}-i^*\[{1\over 2|x|}f_\eps\(V_{\di,\ti}+\phi^\eps_{\di,\ti}\)\]\right\}=0.
$$
Moreover
$$
\left\|\phi^\eps_{\di,\ti}\right\|\le c\eps^{{1\over2}+\sigma}.
$$
\end{proposition}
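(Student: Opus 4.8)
\textbf{Proof proposal for Proposition \ref{pro2}.}

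The plan is to solve the nonlinear equation \eqref{es1} by a contraction mapping argument in the ball $B_\rho:=\{\phi\in K_{\di,\ti}^\perp\ :\ \|\phi\|\le \rho\}$ with $\rho=c\eps^{1/2+\sigma}$ for suitable $c,\sigma>0$. First I would rewrite \eqref{es1} as a fixed point problem. Using the linear operator $L_{\di,\ti}$ of Proposition \ref{pro1}, equation \eqref{es1} is equivalent to
$$\phi=\mathcal{T}_{\di,\ti}(\phi):=L_{\di,\ti}^{-1}\left\{\Pi_{\di,\ti}^\perp i^*\[{1\over 2|x|}\(f_\eps(V_{\di,\ti}+\phi)-f_0'(V_{\di,\ti})\phi\)\]-\mathcal{R}_{\di,\ti}\right\},$$
where the error term is
$$\mathcal{R}_{\di,\ti}:=\Pi_{\di,\ti}^\perp\left\{V_{\di,\ti}-i^*\[{1\over 2|x|}f_\eps(V_{\di,\ti})\]\right\}.$$
By Proposition \ref{pro1}, $L_{\di,\ti}^{-1}$ is bounded uniformly in $\eps$ and in $(\di,\ti)\in\mathbf{C}$, so it suffices to show that $\mathcal{T}_{\di,\ti}$ maps $B_\rho$ into itself and is a contraction there.

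The three ingredients I would establish are: (a) an estimate on the error $\|\mathcal{R}_{\di,\ti}\|$; (b) a Lipschitz-type estimate on the nonlinear remainder $N_\eps(\phi):=f_\eps(V_{\di,\ti}+\phi)-f_\eps(V_{\di,\ti})-f_0'(V_{\di,\ti})\phi$; and (c) combining (a) and (b) via the boundedness of $i^*$ and $L_{\di,\ti}^{-1}$. For (a), I would use the continuity of the weight $\frac{1}{2|x|}$ on the annulus $\Omega$ (bounded away from $0$ and $\infty$), split the contribution into the part coming from the exponent perturbation $\eps$ and the part coming from replacing the sum of projected bubbles by a genuine solution; standard bubble estimates (projection errors $PU_{\delta,\xi}-U_{\delta,\xi}=O(\delta^{(n-2)/2})$, interaction terms between the bubbles at $\xi$ and $\eta=-\xi$) together with the choice $\delta\sim\eps^{(n-1)/(n-2)}$ give $\|\mathcal{R}_{\di,\ti}\|\le c\eps^{1/2+\sigma}$ for some $\sigma>0$. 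This is exactly the type of computation carried out in the references (\cite{MP} and the literature cited), adapted to the presence of the harmless weight $\frac{1}{2|x|}$; the exponent $\frac12+\sigma$ is what forces the bound stated in the Proposition. For (b), using $|f_\eps(a+b)-f_\eps(a)-f_\eps'(a)b|\lesssim |b|^{\min\{p-\eps,2\}}$ plus $|f_\eps'(a)-f_0'(a)|\lesssim \eps |a|^{p-1}(1+|\log|a||)$, together with the Sobolev embedding and the boundedness of $i^*$, I would get $\|\mathcal{T}_{\di,\ti}(\phi_1)-\mathcal{T}_{\di,\ti}(\phi_2)\|\le \tfrac12\|\phi_1-\phi_2\|$ on $B_\rho$ once $\eps$ is small. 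Self-mapping then follows from $\|\mathcal{T}_{\di,\ti}(0)\|\le c\|\mathcal{R}_{\di,\ti}\|\le c\eps^{1/2+\sigma}=\rho/2$ (after adjusting constants) and the contraction estimate. Uniqueness of $\phi^\eps_{\di,\ti}$ in $B_\rho$ is automatic from the contraction property.

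The main obstacle is the sharp estimate (a) on $\|\mathcal{R}_{\di,\ti}\|$: one must track carefully how the weight $\frac1{2|x|}$, the nonlinear exponent perturbation $\eps$, the projection errors onto $H^1_0(\Omega)$, and (in cases \eqref{ans11}) the mutual interactions of up to four bubbles all contribute, and verify that the worst of these is still $o(\eps^{1/2})$ — i.e. that one genuinely gains the extra power $\eps^\sigma$. The rest (the fixed point scheme, the Lipschitz bounds, uniqueness) is routine and can be imported from \cite{MP} with only notational changes; I would state it as such and refer the reader there, supplying only the weight-dependent modifications and the error estimate in Section \ref{due} where the reduced energy expansion is computed.
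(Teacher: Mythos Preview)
Your proposal is correct and follows essentially the same route as the paper: the authors also first estimate the error term $R_{\di,\ti}=\Pi_{\di,\ti}^\perp\{V_{\di,\ti}-i^*[\frac{1}{|x|}f_\eps(V_{\di,\ti})]\}$ as $O(\eps^{1/2+\sigma})$ (deferring the computations to Section~\ref{due} and \cite{ACP}) and then invoke the standard contraction argument of \cite{BMP}, Proposition~2.3, exactly as you outline. The only cosmetic differences are the references cited (\cite{BMP},\cite{ACP} rather than \cite{MP}) and a minor algebraic slip in your displayed fixed-point map (expanding it, the term $i^*[\tfrac{1}{2|x|}f_\eps(V_{\di,\ti})]$ is counted twice); once corrected, your scheme coincides with theirs.
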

\begin{proof}
First, we estimate the rate of the error term
$$
R_{\di,\ti}:=\Pi_{\di,\ti}^\perp\left\{V_{\di,\ti} -i^*\[{1\over  |x|}f_\eps\(V_{\di,\ti} \)\]\right\}
$$
as
$$\left\|R_{\di,\ti}\right\|_{2n\over n+2}=O\(\eps^{{1\over2}+\sigma}\)$$
for some $\sigma>0.$
We argue as in Appendix B of \cite{ACP} using estimates of Section \ref{due}.
Then we argue exactly as in Proposition 2.3 of \cite{BMP}.\end{proof}

Now, we introduce the energy functional $J_\eps: {\rm H}^1_0(\Omega)\to\rr$ defined by
$$J_\eps(u):={1\over2}\int\limits_{\Omega}|\nabla u |^2dx-{1\over p+1-\eps}\int\limits_{\Omega}{1\over 2|x|}| u |^{p+1-\eps}dx,$$
whose critical points are the solutions to problem \eqref{p1}.
Let us define the reduced energy   $\widetilde J_\eps:\Lambda\to\rr$ by
$$\widetilde J_\eps(\di,\ti)=J_\eps\(V_{\di,\ti}+\phi^\eps_{\di,\ti}\).$$
Next, we prove that the critical points of $\widetilde J_\eps$ are the solution to problem \eqref{es2}.

\begin{proposition}\label{pro3}
 The function $V_{\di,\ti}+\phi^\eps_{\di,\ti}$ is a critical point of the functional $J_\eps$ if and only if the point $(\di,\ti)$ is a critical point of the function $\widetilde J_\eps.$
\end{proposition}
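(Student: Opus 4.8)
Since this is the standard ``variational reduction'' lemma I only outline the scheme. The first thing I would do is restate \eqref{es2} variationally: for every $\zeta\in{\rm H}^1_0(\Omega)$,
$$J_\eps'\(V_{\di,\ti}+\phi^\eps_{\di,\ti}\)[\zeta]=\(V_{\di,\ti}+\phi^\eps_{\di,\ti}-i^*\[{1\over 2|x|}f_\eps\(V_{\di,\ti}+\phi^\eps_{\di,\ti}\)\],\zeta\),$$
so, writing $g\in{\rm H}^1_0(\Omega)$ for the Riesz gradient of $J_\eps$ at $V_{\di,\ti}+\phi^\eps_{\di,\ti}$, equation \eqref{es1} (which holds by Proposition \ref{pro2}) says exactly that $g\perp K_{\di,\ti}^\perp$. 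Moreover, since $1/(2|x|)$ is axially symmetric and $f_\eps$ is odd, the map $u\mapsto i^*\[\tfrac1{2|x|}f_\eps(u)\]$ sends $\mathcal H_\lambda$ into itself, and $V_{\di,\ti}+\phi^\eps_{\di,\ti}\in\mathcal H_\lambda$; hence $g\in\mathcal H_\lambda$, and combined with the previous remark (noting that the generators $P\psi^j$, $1\le j\le n-1$, of $K_{\di,\ti}$ are odd in $x_j$, hence already orthogonal to $\mathcal H_\lambda$) this forces $g\in K_{\di,\ti}\cap\mathcal H_\lambda$. Consequently $V_{\di,\ti}+\phi^\eps_{\di,\ti}$ is a critical point of $J_\eps$ if and only if $g=0$, i.e. if and only if \eqref{es2} holds. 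One implication of the Proposition follows immediately: if $g=0$, then for each scalar parameter $a$ among the components of $(\di,\ti)$ the chain rule gives $\partial_a\widetilde J_\eps(\di,\ti)=J_\eps'\(V_{\di,\ti}+\phi^\eps_{\di,\ti}\)\[\partial_a\(V_{\di,\ti}+\phi^\eps_{\di,\ti}\)\]=0$.

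For the converse, assume $(\di,\ti)$ is a critical point of $\widetilde J_\eps$. Differentiating $\widetilde J_\eps(\di,\ti)=J_\eps\(V_{\di,\ti}+\phi^\eps_{\di,\ti}\)$ in a scalar parameter $a$ gives $0=\(g,\partial_aV_{\di,\ti}+\partial_a\phi^\eps_{\di,\ti}\)$. I would split $\partial_a\phi^\eps_{\di,\ti}=\Pi_{\di,\ti}\partial_a\phi^\eps_{\di,\ti}+\Pi^\perp_{\di,\ti}\partial_a\phi^\eps_{\di,\ti}$: the second summand is orthogonal to $g\in K_{\di,\ti}$ by the first step, while differentiating the defining relations $\(\phi^\eps_{\di,\ti},\psi\)=0$ ($\psi$ running over a basis of $K_{\di,\ti}$, whose $\partial_a$-derivatives are bounded on compacta of $\Lambda$) together with the bound $\|\phi^\eps_{\di,\ti}\|=O(\eps^{1/2+\sigma})$ of Proposition \ref{pro2} gives $\|\Pi_{\di,\ti}\partial_a\phi^\eps_{\di,\ti}\|=O(\eps^{1/2+\sigma})$. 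Hence $\big|\(g,\partial_aV_{\di,\ti}\)\big|\le C\eps^{1/2+\sigma}\|g\|$ for every $a$.

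To conclude $g=0$ I would use that the vectors $\partial_aV_{\di,\ti}$ form, at main order, a basis of $K_{\di,\ti}\cap\mathcal H_\lambda$. Indeed, by the explicit choices \eqref{ans2}--\eqref{ans3} (resp. \eqref{ans21}--\eqref{ans31}), $\partial_aV_{\di,\ti}$ equals $\eps^{n-1\over n-2}$ (resp. $\eps$) times a fixed combination of the $P\psi^0$'s (resp. $P\psi^n$'s) centred at the points $\pm\xi_i$; since the axial symmetry eliminates every $P\psi^j$ with $1\le j\le n-1$, the space $K_{\di,\ti}\cap\mathcal H_\lambda$ is spanned precisely by those (suitably symmetrised) $P\psi^0$'s and $P\psi^n$'s, their number matches the number of parameters, and — after dividing out the scaling factors — their Gram matrix is to leading order diagonal with positive entries ($P\psi^0_{\delta_i,\xi_i}$ and $P\psi^n_{\delta_i,\xi_i}$ being $L^2\(pU_{\delta_i,\xi_i}^{p-1}\)$-orthogonal at main order, and distinct bubbles interacting only at lower order). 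Expanding $g$ along this basis and inverting the rescaled Gram matrix (uniformly invertible on compact subsets of $\Lambda$ for $\eps$ small), the smallness bounds above propagate to each coefficient of $g$ and force $g=0$ for $\eps$ small; by the first step this is exactly \eqref{es2}, so $V_{\di,\ti}+\phi^\eps_{\di,\ti}$ is a critical point of $J_\eps$.

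The step I expect to be the main obstacle is this last linear-algebra point: one must check, configuration by configuration in Theorem \ref{main}, that the number of free parameters equals $\dim\(K_{\di,\ti}\cap\mathcal H_\lambda\)$ and that the rescaled Gram matrix of $\{\partial_aV_{\di,\ti}\}$ is invertible uniformly for $(\di,\ti)$ in a compact set and $\eps$ small (the scalings being different for the $\di$- and $\ti$-directions). The remaining ingredients — the invariance of $u\mapsto i^*\[\tfrac1{2|x|}f_\eps(u)\]$ under the symmetries defining $\mathcal H_\lambda$, the chain-rule identities, and the differentiation of the orthogonality constraints — are routine.
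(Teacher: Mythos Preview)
Your outline is the standard variational-reduction argument, and this is exactly what the paper has in mind: its entire proof reads ``We argue as in Proposition~1 of \cite{BLR}.''

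One intermediate claim, however, does not hold as stated. You assert that the $\partial_a$-derivatives of the basis elements $P\psi^j_{\delta_i,\xi_i}$ are bounded on compacta of $\Lambda$ and deduce $\|\Pi_{\di,\ti}\partial_a\phi^\eps_{\di,\ti}\|=O(\eps^{1/2+\sigma})$. Since $\delta_i\sim\eps^{(n-1)/(n-2)}\to0$ while $\|\partial_\delta P\psi^j\|,\ \|\partial_{\xi_n}P\psi^j\|\sim\delta^{-2}$, those derivatives in fact blow up, and the bound on $\|\Pi\partial_a\phi\|$ (hence on $|(g,\partial_aV_{\di,\ti})|/\|g\|$) cannot be obtained this way; for $a=t$ and $n=3$ the estimate one actually gets is only $O(\eps^{-1/2+\sigma})$. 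The remedy is precisely the scaling bookkeeping you anticipate in your last paragraph: write $g=\sum_\alpha c_\alpha e_\alpha$ with $e_\alpha$ the symmetrised $P\psi^0,P\psi^n$, compute directly
\[
(g,\partial_a\phi)=\sum_\alpha c_\alpha(e_\alpha,\partial_a\phi)=-\sum_\alpha c_\alpha(\phi,\partial_a e_\alpha),
\]
and observe that in the identity $0=(g,\partial_aV_{\di,\ti})+(g,\partial_a\phi)$ both terms carry the \emph{same} divergent prefactor (namely $\eps^{(n-1)/(n-2)}\delta^{-2}$ when $a\in\di$, and $\eps\,\delta^{-2}$ when $a\in\ti$). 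After cancellation one obtains $|c_{\alpha}|\le C\eps^{1/2+\sigma}\sum_\beta|c_\beta|$ for each $\alpha$, whence $c=0$ for $\eps$ small. With this adjustment the rest of your argument is correct.
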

\begin{proof}

We argue as in Proposition 1 of \cite{BLR}.

\end{proof}

The problem is thus reduced to the search for critical points of $\widetilde J_\eps,$ so it is necessary to compute the asymptotic expansion of $\widetilde J_\eps$.

\begin{proposition}\label{pro4}
 It holds true that
 $$ \widetilde J_\eps(\di,\ti)= c_1+ c_2\eps+c_3\eps\log\eps+\eps(1+|\lambda|)\Phi(\di,\ti)+o(\eps),
$$
 $C^0-$uniformly on compact sets of $\Lambda,$ where
 \begin{itemize}
 \item[(i)] in case \eqref{ans1}
 $$ \Phi(\di,\ti):= c_4 \({d \over 2t }\)^{n-2} +c_5t -c_6 \ln d $$
 \item[(ii)] in case \eqref{ans11}
 \begin{align*}
 \Phi(\di,\ti):=&c_4\[\({d_1\over 2t_1}\)^{n-2}+\({d_2\over 2t_2}\)^{n-2}+2\(d_1d_2\)^{n-2\over2}\({1\over |t_1-t_2|^{n-2}}-{1\over |t_1+t_2|^{n-2}}\)\]
 \nonumber\\ &+c_5\(t_1+t_2\)-c_6\(\ln d_1+\ln d_2\).\end{align*}
 \end{itemize}
Here  $c_i$'s are  constants and $c_4,$ $c_5$ and $c_6$ are positive.
\end{proposition}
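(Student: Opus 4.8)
The plan is to reduce the statement to the expansion of $J_\eps$ on the pure ansatz $V_{\di,\ti}$, which is the content of Section~\ref{due}. First I would show that $\widetilde J_\eps(\di,\ti)=J_\eps(V_{\di,\ti})+o(\eps)$, $C^0$-uniformly on compact sets of $\Lambda$: writing $u=V_{\di,\ti}+\phi^\eps_{\di,\ti}$ and Taylor-expanding $J_\eps$ at $V_{\di,\ti}$,
$$ J_\eps(V+\phi)=J_\eps(V)+\langle J'_\eps(V),\phi\rangle+\tfrac12\langle J''_\eps(V)\phi,\phi\rangle+o(\|\phi\|^2), $$
one notes that, since $\phi\in K_{\di,\ti}^\perp$, the linear term equals $(R_{\di,\ti},\phi)$ --- the $K_{\di,\ti}^\perp$-component of $J'_\eps(V)$ being, up to the weight factor, exactly the error of Proposition~\ref{pro2} --- hence bounded by $\|R_{\di,\ti}\|\,\|\phi\|$, while the quadratic term is $O(\|\phi\|^2)$ by boundedness of $J''_\eps(V)$ on $K_{\di,\ti}^\perp$. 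Using $\|\phi^\eps_{\di,\ti}\|\le c\eps^{1/2+\sigma}$ and $\|R_{\di,\ti}\|=O(\eps^{1/2+\sigma})$ from Proposition~\ref{pro2}, all correction terms are $O(\eps^{1+2\sigma})=o(\eps)$.

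Next I would expand $J_\eps(V_{\di,\ti})$ by the classical projected-bubble computation. Writing $PU_{\delta_i,\xi_i}=U_{\delta_i,\xi_i}-\varphi_{\delta_i,\xi_i}$ with $\varphi_{\delta_i,\xi_i}\ge0$ the boundary defect, and using $-\Delta PU_{\delta_i,\xi_i}=U_{\delta_i,\xi_i}^{p}$ in $\Omega$, one expresses $\tfrac12\|V_{\di,\ti}\|^2$ through $\int_\Omega U_{\delta_i,\xi_i}^{p+1}$, the boundary integrals $\int_\Omega\varphi_{\delta_i,\xi_i}U_{\delta_i,\xi_i}^p$, the interaction integrals $\int_\Omega U_{\delta_i,\xi_i}^p U_{\delta_j,\xi_j}$ (and analogous ones involving the defects); for the nonlinear term one uses $|s|^{-\eps}=1-\eps\log|s|+O(\eps^2\log^2|s|)$, $\tfrac1{p+1-\eps}=\tfrac1{p+1}+O(\eps)$, and Taylor-expands the weight at the concentration point, $\tfrac1{2|\xi_i|}=\tfrac12-\tfrac{\eps t_i}{2}+O(\eps^2)$ (recall $\xi_i=(1+\eps t_i)\xi_0$), the first-order variation of the weight integrating to zero against $U_{\delta_i,\xi_i}^{p+1}$ by radial symmetry. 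Matching orders, the $(\di,\ti)$-independent pieces of $\int U^{p+1}$ and $\int U^{p+1}\log U$ give $c_1+c_2\eps+c_3\eps\log\eps$; the identity $\log\delta_i=\tfrac{n-1}{n-2}\log\eps+\log d_i$ produces, inside $\int U^{p+1}\log U$, both the $\eps\log\eps$ term and the term $-c_6\log d_i$; the standard estimate $\int_\Omega\varphi_{\delta_i,\xi_i}U_{\delta_i,\xi_i}^p=c\,\delta_i^{n-2}\varrho_\Omega(\xi_i)+o(\delta_i^{n-2})$, with $\varrho_\Omega$ the Robin function, combined with its half-space asymptotics $\varrho_\Omega(\xi)=\bigl(2\,\mathrm{dist}(\xi,\partial\Omega)\bigr)^{-(n-2)}+\hbox{(lower order)}$ near the inner sphere and with $\delta_i=\eps^{(n-1)/(n-2)}d_i$, gives $c_4(d_i/2t_i)^{n-2}\eps$; and $-\tfrac{\eps t_i}{2}$ times $\int U^{p+1}$ gives $c_5 t_i\eps$, with $c_5>0$ because of the minus sign in front of the nonlinear part of $J_\eps$.

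For the tower ansatz \eqref{ans11} one moreover needs the interaction of the two bubbles at $\xi_1,\xi_2$, both of the form $(1+\eps t_i)\xi_0$ and hence at distance $\eps|t_1-t_2|$, which is $(\delta_1\delta_2)^{(n-2)/2}|\xi_1-\xi_2|^{-(n-2)}=(d_1d_2)^{(n-2)/2}|t_1-t_2|^{-(n-2)}\eps$, together with the interaction of one bubble with the reflection of the other across $\{|x|=1\}$ (carried by the $\varphi$'s, with $|\xi_1-\xi_2^{*}|=\eps(t_1+t_2)+O(\eps^2)$), which is of the same order and opposite sign; this produces the bracket in Proposition~\ref{pro4}(ii). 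Finally, bubbles centred near $\xi_0$ and near $-\xi_0$ are at distance $\approx2$, so their interaction is $O(\eps^{n-1})=o(\eps)$; hence the presence of the mirror cluster when $\lambda=\pm1$ merely doubles the $(\di,\ti)$-dependent part of the energy, which explains the factor $1+|\lambda|$ in front of $\Phi$. Collecting all terms and discarding those of order $o(\eps)$ yields the stated formulas.

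The delicate point will be the boundary term: one needs the regular part of the Green function of the annulus near the inner sphere to agree with the explicit half-space expression up to an error that remains $o(\eps)$ after multiplication by $\delta_i^{n-2}$, and one must make sure that the weight $\tfrac1{2|x|}$ --- which enters both $J_\eps$ and, through its mismatch with the standard bubble $U_{\delta,\xi}$, the error term $R_{\di,\ti}$ --- contributes at order $\eps$ only through the linear term $c_5 t_i$. Granting the estimates recalled in the Appendix and in the references cited in Section~\ref{due}, the expansion then follows.
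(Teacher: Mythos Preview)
Your proposal is correct and follows essentially the same route as the paper: first reduce $\widetilde J_\eps$ to $J_\eps(V_{\di,\ti})+o(\eps)$ using the smallness of $\phi^\eps_{\di,\ti}$ (the paper simply cites \cite{BLR,BMP} for this), and then expand $J_\eps(V_{\di,\ti})$ via the projected-bubble calculus, where the paper packages the computation into Lemma~\ref{lex1} (gradient part), Lemma~\ref{ley1} (nonlinear part with exponent $p+1$), Lemma~\ref{lem33} (passage from $p+1$ to $p+1-\eps$), and the supporting Lemmas~\ref{lez1} and~\ref{lem3}. All the ingredients you list---the Robin/half-space asymptotics giving $c_4(d_i/2t_i)^{n-2}$, the weight expansion giving $c_5 t_i$, the $\log\delta_i$ term giving both $c_3\eps\log\eps$ and $-c_6\log d_i$, the bubble interaction $(d_1d_2)^{(n-2)/2}\bigl(|t_1-t_2|^{-(n-2)}-|t_1+t_2|^{-(n-2)}\bigr)$, and the negligibility of the $\xi_0/{-\xi_0}$ cross terms yielding the factor $1+|\lambda|$---match exactly what the paper computes.
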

\begin{proof}

The proof is postponed to Section \ref{due}.
 \end{proof}

\begin{proof}[Proof of Theorem \ref{main}]
It is easy to verify that the function $\Phi$ of Proposition \ref{pro4} in both cases
has a minimum point which is stable under uniform perturbations. Therefore, if $\eps$ is small enough there exists a critical point $(\di_\eps,\ti_\eps)$
of the reduced energy $ \widetilde J_\eps.$ Finally, the claim follows by Proposition \ref{pro3}.

\end{proof}

 \section{Expansion of the reduced energy}\label{due}

It is standard to prove that
$$\widetilde J_\eps(\di,\ti)=J_\eps\(V_{\di,\ti}\)+ o(\eps)$$
(see for example \cite{BLR,BMP}).
So the problem reduces to estimating the leading term $J_\eps\(V_{\di,\ti}\).$
We will estimate it in case \eqref{ans11} with $|\lambda|=1$, because in   the other cases  its expansion is easier
and can be deduced from that. Proposition \ref{pro4} will follow from Lemma \ref{lex1},  Lemma \ref{ley1} and  Lemma \ref{lem33}.

For future reference we define the constants
\begin{align}\label{g1}
&\gamma_1=\alpha_n^{p+1}\int\limits_{\rr^n}{1\over (1+|y|^2)^n}dy,\\
\label{g2}
&\gamma_2=\alpha_n^{p +1}\int\limits_{\rr^n}{1\over (1+|y|^2) ^{n+2\over2}}dy,\\
\label{g3}
&\gamma_3=\alpha_n^{p+1 }\int\limits_{\rr^n}{1\over (1+|y|^2) ^{n }}\log {1\over (1+|y|^2 )^{n-2\over2}}dy.
\end{align}

For sake of simplicity, we set $U_i:=U_{\de_i,\xi_i}$ and $V_i:=V_{\mu_i,\eta_i}.$

\begin{lemma}\label{lex1}
It holds true that
\begin{align*}&{1\over2}\int\limits_\Omega|\nabla V_{\di,\ti}|^2dx=2 \gamma_1 \\&- \gamma_2\eps\[\({d _1\over2t_1}\)^{n-2}+\({d_2\over2t_2}\)^{n-2}
+\(d_1d_2\)^{n-2\over2}\({1\over|t_1-t_2|^{n-2}}-{1\over|t_1+t_2|^{n-2}}\)\] +o(\eps).
\end{align*}
\end{lemma}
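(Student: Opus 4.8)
The plan is to integrate by parts and reduce everything to self‑ and mutual‑interaction integrals of the bubbles. Since $V_{\di,\ti}=PU_1-PU_2+\lambda(PV_1-PV_2)$ with $PV_i:=PU_{\mu_i,\eta_i}$, and $-\Delta PU_i=U_i^p$, $-\Delta PV_i=V_i^p$ in $\Omega$, one has
$$
\frac12\int_\Omega|\nabla V_{\di,\ti}|^2\,dx=\frac12\sum_{i=1}^2\Big(\int_\Omega U_i^p\,PU_i+\int_\Omega V_i^p\,PV_i\Big)-\int_\Omega U_1^p\,PU_2-\int_\Omega V_1^p\,PV_2+\lambda\,T_\eps ,
$$
where, using $\lambda^2=1$, the term $T_\eps$ collects the four mixed integrals $\int_\Omega U_i^p\,PV_j$ between a $+$‑side and a $-$‑side bubble. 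In each integral I would replace the projection by $PW=W-\varphi_W$, with $\varphi_W$ harmonic in $\Omega$ and $\varphi_W=W$ on $\partial\Omega$, and use the standard estimate $0\le\varphi_{U_{\de,\xi}}(x)=\frac{\alpha_n}{c_n}\de^{\frac{n-2}2}\bigl(H(x,\xi)+o(1)\bigr)$ uniformly in $\bar\Omega$, where $G(x,y)=c_n|x-y|^{2-n}-H(x,y)$ is the Dirichlet Green function of $-\Delta$ in $\Omega$ ($c_n$ being the normalizing constant of the fundamental solution). This expansion is available because we are in the regime $\de_i=\eps^{\frac{n-1}{n-2}}d_i\ll\operatorname{dist}(\xi_i,\partial\Omega)=\eps t_i+o(\eps)$, and likewise for $\eta_i=-\xi_i$.

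For the four diagonal terms I would use $\int_\Omega U_i^{p+1}=\gamma_1+o(\eps)$ — the truncation error from replacing $\rr^n$ by $\Omega$ is $O\bigl(\eps^{\frac n{n-2}}\bigr)=o(\eps)$ — together with $\int_\Omega U_i^p=\frac{\gamma_2}{\alpha_n}\de_i^{\frac{n-2}2}\bigl(1+o(1)\bigr)$, so that $\int_\Omega U_i^p\,PU_i=\gamma_1-\frac{\gamma_2}{c_n}\de_i^{n-2}H(\xi_i,\xi_i)+o(\eps)$. The Robin function of the annulus near the inner sphere $\{|x|=1\}$ satisfies $H(\xi_i,\xi_i)=c_n\bigl(2\operatorname{dist}(\xi_i,\partial\Omega)\bigr)^{2-n}\bigl(1+o(1)\bigr)=c_n(2\eps t_i)^{2-n}\bigl(1+o(1)\bigr)$, its singular part being produced by the reflection of $\xi_i$ across $\{|x|=1\}$, while the outer sphere and the curvature of the inner one contribute only lower order terms; hence $\de_i^{n-2}H(\xi_i,\xi_i)=c_n\,\eps\,(d_i/2t_i)^{n-2}\bigl(1+o(1)\bigr)$. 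Doing the same for the $V$‑bubbles (with $\eta_i$, still at distance $\eps t_i$ from $\partial\Omega$) yields the contribution $2\gamma_1-\gamma_2\eps\bigl[(d_1/2t_1)^{n-2}+(d_2/2t_2)^{n-2}\bigr]$.

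For the interaction between the two close bubbles I would invoke the classical estimate $\int_\Omega U_1^p\,U_2=\gamma_2(\de_1\de_2)^{\frac{n-2}2}|\xi_1-\xi_2|^{2-n}\bigl(1+o(1)\bigr)$, valid since $|\xi_1-\xi_2|=\eps|t_1-t_2|\gg\de_i$, together with $\int_\Omega U_1^p\,\varphi_{U_2}=\frac{\gamma_2}{c_n}(\de_1\de_2)^{\frac{n-2}2}H(\xi_1,\xi_2)\bigl(1+o(1)\bigr)$ and $H(\xi_1,\xi_2)=c_n|\xi_1-\xi_2^\ast|^{2-n}\bigl(1+o(1)\bigr)$, with $\xi_2^\ast$ the reflection of $\xi_2$ across $\{|x|=1\}$, so that $|\xi_1-\xi_2^\ast|=\eps(t_1+t_2)\bigl(1+o(1)\bigr)$. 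Substituting $\de_i=\eps^{\frac{n-1}{n-2}}d_i$ gives
$$
\int_\Omega U_1^p\,PU_2=\gamma_2\,\eps\,(d_1d_2)^{\frac{n-2}2}\Bigl(\frac1{|t_1-t_2|^{n-2}}-\frac1{|t_1+t_2|^{n-2}}\Bigr)+o(\eps),
$$
and, by the symmetry $x\mapsto-x$ of $\Omega$, the same expression for $\int_\Omega V_1^p\,PV_2$; these supply the mutual‑interaction part of the statement. Finally $T_\eps=o(\eps)$: since $|\xi_i-\eta_j|=|\xi_i+\xi_j|=2+o(1)$ stays bounded away from $0$, one gets $\int_\Omega U_i^p\,PV_j=O\bigl((\de_i\de_j)^{\frac{n-2}2}\bigr)=O(\eps^{n-1})=o(\eps)$ for $n\ge3$. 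Collecting the four groups with the signs dictated by $V_{\di,\ti}$ produces the claimed expansion.

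The part I expect to be most delicate is the boundary‑concentration bookkeeping. One has to make the approximation of $\varphi_{U_{\de,\xi}}$ by its leading image uniform while $\xi$ approaches $\partial\Omega$ at rate $\eps$ and $\de\sim\eps^{\frac{n-1}{n-2}}$ — here a Kelvin transform with respect to $\{|x|=1\}$ flattens the relevant portion of the annulus and turns the estimate into a known one — and one has to check that every error generated along the way (truncation of the $\rr^n$‑integrals, tails of the interaction integrals, the sub‑leading images hidden in $H$) is genuinely $o(\eps)$ rather than merely $O(\eps)$; this is precisely where the exact rate $\de=\eps^{\frac{n-1}{n-2}}d$ is used.
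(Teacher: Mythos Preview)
Your argument is correct and follows essentially the same route as the paper: expand the square, integrate by parts, kill the $U_i$--$V_j$ cross terms as $O((\de_i\de_j)^{\frac{n-2}{2}})=o(\eps)$, use the symmetry $x\mapsto -x$ to reduce the $V$--contributions to the $U$--contributions, and then evaluate the remaining self-- and mutual--interaction integrals through the expansion $PU_{\de,\xi}=U_{\de,\xi}-\alpha_n\de^{\frac{n-2}{2}}H(\cdot,\xi)+O\bigl(\de^{\frac{n+2}{2}}/\mathrm{dist}(\xi,\partial\Omega)^n\bigr)$ together with the reflection approximation $H(x,\xi)\approx|x-\bar\xi|^{2-n}$. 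The paper organizes the same computation by splitting each integral over $B(\xi_1,\tau)$ and its complement and defers the key limits to two auxiliary lemmas, whereas you sketch the estimates inline, but the substance is identical.
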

\begin{proof}
We have
\begin{align}\label{lex11}
\int\limits_\Omega|\nabla V_{\di,\ti}|^2dx=&\int\limits_\Omega|\nabla PU_1|^2dx+\int\limits_\Omega|\nabla PU_2|^2dx-2\int\limits_\Omega \nabla PU_1\nabla PU_2dx\nonumber\\
&+\int\limits_\Omega|\nabla PV_1|^2dx+\int\limits_\Omega|\nabla PV_2|^2dx-2\int\limits_\Omega \nabla PV_1\nabla PV_2dx\nonumber\\
&+2\sum\limits_{i,j=1}^2\lambda\int\limits_\Omega \nabla PU_i  \nabla PV_jdx\nonumber\\
=&2\(\int\limits_\Omega|\nabla PU_1|^2dx+\int\limits_\Omega|\nabla PU_2|^2dx-2\int\limits_\Omega \nabla PU_1\nabla PU_2dx\)+o(\eps),\nonumber\\
\end{align}
because of the symmetry (see \eqref{ans21} and \eqref{ans31}) and the fact that
$$\int\limits_\Omega \nabla PU_i  \nabla PV_jdx=O\(\de_i^{n-2\over2}\mu_j^{n-2\over2}\)=o(\eps).$$
Let us estimate the first term in \eqref{lex11}. The estimate of the second term   is similar.
We set
\begin{equation}\label{tau}\tau:=\min\left\{{\rm d}(\xi_1,\partial\Omega),{\rm d}(\xi_2,\partial\Omega),{|\xi_1-\xi_2|\over2}\right\}=\min\left\{\tau_1,\tau_2,{|\tau_1-\tau_2|\over2}\right\}.\end{equation}
We get
$$ \int\limits_\Omega|\nabla PU_1|^2dx=\int\limits_\Omega U_1^pPU_1dx=\int\limits_{B(\xi_1,\tau )}U_1^pPU_1dx+\int\limits_{\Omega\setminus B(\xi_1,\tau )}U_1^pPU_1dx.
$$

By Lemma \ref{lem2} we deduce
$$
 \int\limits_{\Omega\setminus B(\xi_1,\tau )} U_1^pPU_1dx=O\(\({\delta_1\over\tau  }\)^n\)=o(\eps)
$$

\begin{align}\label{lex14}
 &\int\limits_{B(\xi_1,\tau )} U_1^pPU_1dx=    \int\limits_{B(\xi_1,\tau )} U_1^{p+1}dx +\int\limits_{B(\xi_1,\tau )} U_1^p\(PU_1-U_1\)dx,
\end{align}
with
$$
  \int\limits_{B(\xi_1,\tau)} U_1^{p+1} =\gamma_1+O\(\({\delta_1\over\tau_1}\)^n\)=\gamma_1+o(\eps).
$$
 The second term in \eqref{lex14} is estimated in (i) of Lemma \ref{lez1}.

It remains only to estimate the third term in \eqref{lex11}.

\begin{align}\label{lex18}
\int\limits_\Omega \nabla PU_1\nabla PU_2dx=\int\limits_\Omega   U_1^p  PU_2dx=\int\limits_{B(\xi_1,\tau )} U_1^p  PU_2dx+
\int\limits_{\Omega\setminus B(\xi_1,\tau )} U_1^p  PU_2dx.
\end{align}

We have
\begin{align*}
 &\int\limits_{\Omega\setminus B(\xi_1,\tau )} U_1^p PU_2dx=O\({\delta_1^{n+2\over2}\delta_2^{n-2\over2} } \int\limits_{\Omega\setminus B(\xi_1,\tau )}{1\over|x-\xi_1|^{n+2}}{1\over|x-\xi_2|^{n-2}}dx\)\nonumber\\ &
=O\({\delta_1^{n+2\over2}\delta_2^{n-2\over2}\over\tau^n } \int\limits_{\rr^n\setminus B(0,1 )}{1\over|y|^{n+2}}{1\over|y+{\xi_1-\xi_2\over\tau}|^{n-2}}dy\)=O\({\delta_1^{n+2\over2}\delta_2^{n-2\over2}\over\tau^n }\)=o(\eps).
\end{align*}

 The first term in \eqref{lex18} is estimated in (ii) of Lemma \ref{lez1}.

The claim then follows collecting all the previous estimates and  taking into account the choice of $\delta_i'$s and $\tau_i'$s made in \eqref{ans1} and \eqref{ans2}.

\end{proof}

\begin{lemma}\label{ley1}
It holds true that
\begin{align*}
&{1\over p+1}\int\limits_\Omega{1\over |x|}|  V_{\di,\ti}|^{p+1}dx=2\[{2\over p+1} \gamma_1-{1\over p+1} \gamma_1\eps\(t_1+t_2\)\]\\&-2\gamma_2\eps\[\({d_1\over2t_1}\)^{n-2}+\({d_2\over2t_2}\)^{n-2}
+2\(d_1d_2\)^{n-2\over2}\({1\over|t_1-t_2|^{n-2}}-{1\over|t_1+t_2|^{n-2}}\)\] +o(\eps).
\end{align*}
\end{lemma}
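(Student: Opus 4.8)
The plan is to run the same argument as in the proof of Lemma~\ref{lex1}, since $\frac1{p+1}\int_\Omega\frac1{|x|}|V_{\di,\ti}|^{p+1}$ is built out of exactly the same bubble--bubble and bubble--boundary interactions, the only genuinely new feature being the weight $\frac1{|x|}$. First I would localize: with $\tau$ as in \eqref{tau}, split $\Omega$ into the four balls $B(\pm\xi_1,\tau),B(\pm\xi_2,\tau)$ and the complement; on the complement the projected bubbles decay as in Lemma~\ref{lem2}, so that contribution is $O((\delta/\tau)^n)=o(\eps)$, exactly as in Lemma~\ref{lex1}. Since the reflection $x_n\mapsto-x_n$ interchanges $B(\xi_i,\tau)$ with $B(-\xi_i,\tau)$ (see \eqref{ans21}--\eqref{ans31}) and $|{\pm\xi_i}|=1+\eps t_i$, so the weight takes the same value on the two balls, the integral equals $2$ times the sum of the contributions of $B(\xi_1,\tau)$ and $B(\xi_2,\tau)$; this is the global factor $2$ in the statement. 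No $\eps\log\eps$ or $\ln d_i$ terms appear because the exponent here is exactly $p+1$ (the $\eps$ in $p_\eps$ being handled separately in Lemma~\ref{lem33}).

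On $B(\xi_1,\tau)$ I would expand the weight and the power separately. For the weight, Taylor expand $x\mapsto1/|x|$ at $\xi_1$: the zeroth order term is $1/|\xi_1|=1/(1+\eps t_1)=1-\eps t_1+O(\eps^2)$; the first order term is a multiple of $\int_{B(\xi_1,\tau)}(x-\xi_1)\,U_1^{p+1}\,dx=0$, which vanishes because $U_1^{p+1}$ is radial about $\xi_1$ and the ball is centred there; the second order term is $O\big(\int_{\rr^n}|x-\xi_1|^2U_1^{p+1}\,dx\big)=O(\delta_1^2)=o(\eps)$, since $\delta_1=\eps^{n-1\over n-2}d_1$. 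Hence
$$\int_{B(\xi_1,\tau)}\frac1{|x|}|V_{\di,\ti}|^{p+1}=(1-\eps t_1)\int_{B(\xi_1,\tau)}|V_{\di,\ti}|^{p+1}+o(\eps),$$
and the factor $1-\eps t_1$ may be replaced by $1$ on every subleading piece below. For the power, set $\Psi:=-PU_2+\lambda PV_1-\lambda PV_2$, so $V_{\di,\ti}=PU_1+\Psi$ on $B(\xi_1,\tau)$, and use
$$\big||PU_1+\Psi|^{p+1}-(PU_1)^{p+1}-(p+1)(PU_1)^p\Psi\big|\le C\big((PU_1)^{p-1}\Psi^2+|\Psi|^{p+1}\big)$$
(valid since $PU_1>0$); both remainder integrals over $B(\xi_1,\tau)$ are $o(\eps)$ by the bubble decay together with $\delta_i\ll\tau\sim\eps$ on compact subsets of $\Lambda$, by standard interaction estimates as in the proof of Proposition~\ref{pro2}.

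It remains to evaluate the two main integrals on $B(\xi_1,\tau)$. First,
$$\int_{B(\xi_1,\tau)}(PU_1)^{p+1}=\int_{B(\xi_1,\tau)}U_1^{p+1}+(p+1)\int_{B(\xi_1,\tau)}U_1^p(PU_1-U_1)+o(\eps)=\gamma_1-(p+1)\gamma_2\eps\({d_1\over2t_1}\)^{n-2}+o(\eps),$$
by the Taylor expansion of $(\,\cdot\,)^{p+1}$ and the estimate of the second term in \eqref{lex14}, which is (i) of Lemma~\ref{lez1}, together with ${\rm d}(\xi_1,\partial\Omega)=\eps t_1$; the constant is $\gamma_2$ because $(U_1-PU_1)(x)\approx\alpha_n\delta_1^{n-2\over2}(2\eps t_1)^{-(n-2)}$ for $x$ near $\xi_1$ and $\int U_1^p=\alpha_n^p\delta_1^{n-2\over2}\int(1+|y|^2)^{-{n+2\over2}}dy$, cf.\ \eqref{g2}. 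Second, in $(p+1)\int_{B(\xi_1,\tau)}(PU_1)^p\Psi$ the contributions of $PV_1,PV_2$ (centred at $-\xi_1,-\xi_2$, at distance $\approx2$ from $\xi_1$) satisfy $\int_{B(\xi_1,\tau)}(PU_1)^pPV_j\sim(\delta_1\delta_j)^{n-2\over2}=O(\eps^{n-1})=o(\eps)$ and drop, while $\int_{B(\xi_1,\tau)}(PU_1)^pPU_2$ is, modulo $o(\eps)$, the first term in \eqref{lex18}, estimated in (ii) of Lemma~\ref{lez1}, equal to $\gamma_2\eps(d_1d_2)^{n-2\over2}\(|t_1-t_2|^{-(n-2)}-|t_1+t_2|^{-(n-2)}\)+o(\eps)$, the two pieces coming respectively from $U_2(\xi_1)$ and from the regular part of $PU_2$ at $\xi_1$, i.e.\ the image charge at the reflected point $\approx(1-\eps t_2)\xi_0$. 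Dividing by $p+1$, adding the contribution of $B(\xi_2,\tau)$ (which carries the symmetric cross term, equal because $\int U_1^pU_2=\int U_1U_2^p$), and multiplying by $2$ gives the stated expansion.

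The hard part is not any single estimate but the bookkeeping, and the one subtlety worth stressing is that here the relevant geometric distances are of order $\eps$, not order $1$: the ``same--side'' interactions --- between $\xi_1$ and $\xi_2$ at distance $\eps|t_1-t_2|$, and between a bubble and its reflection in the inner sphere at distance $\approx2\eps t_i$ --- land at order $\eps$ and must be kept with their exact coefficients, whereas the ``opposite--side'' interactions between $\xi_i$ and $-\xi_j$ (distance $\approx2$) are $o(\eps)$ and disappear. This is precisely the structure already settled in Lemma~\ref{lex1} and in the appendix lemmas \ref{lem2} and \ref{lez1}; the genuinely new ingredient, the weight $1/|x|$, contributes nothing beyond the harmless factor $1/|\xi_i|=1-\eps t_i+O(\eps^2)$, exactly because its first-order Taylor term integrates to zero over a ball centred at $\xi_i$.
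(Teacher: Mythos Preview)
Your argument is correct and reaches the same expansion by essentially the same localization strategy. The organizational difference is that you Taylor expand $1/|x|$ about $\xi_i$ first and then quote the \emph{unweighted} interaction estimates (i)--(ii) of Lemma~\ref{lez1}, whereas the paper keeps $1/|x|$ in the integrand throughout, adds and subtracts $|U_i|^{p+1}$ at the outset, and invokes the \emph{weighted} estimates (iii)--(v) of Lemma~\ref{lez1}; the two routes are equivalent precisely because $1/|x|=1/|\xi_i|+O(|x-\xi_i|)=1-\eps t_i+O(\eps^2)$ on $B(\xi_i,\tau)$, as you observe. Your packaging has the virtue of cleanly separating the role of the weight from the bubble interactions.

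One small imprecision: when you discard the first-order Taylor term of $1/|x|$ by appealing to the radiality of $U_1^{p+1}$, the actual integrand on $B(\xi_1,\tau)$ is $|V_{\di,\ti}|^{p+1}$, which is not radial about $\xi_1$. The extra piece is
\[
O\(\int_{B(\xi_1,\tau)}|x-\xi_1|\,\big||V_{\di,\ti}|^{p+1}-U_1^{p+1}\big|\,dx\)=O\(\frac{\delta^{(n-2)/2}}{\tau^{n-2}}\int_{B(\xi_1,\tau)}|x-\xi_1|\,U_1^{p}\,dx\)=O\(\frac{\delta^{n-1}}{\tau^{n-2}}\)=o(\eps),
\]
so the conclusion stands, but it is worth saying so explicitly.
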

\begin{proof}
We have
\begin{align}\label{ley11}
& \int\limits_\Omega{1\over |x|}|  V_{\di,\ti}|^{p+1}dx=\int\limits_\Omega{1\over |x|}|  PU_1-PU_2+\lambda\(PV_1-PV_2\)|^{p+1}dx\nonumber\\ &
 =\int\limits_\Omega{1\over |x|}\(|  PU_1-PU_2+\lambda\(PV_1-PV_2\)|^{p+1}-|   U_1 |^{p+1}-|    U_2|^{p+1}-|   V_1 |^{p+1}-|    V_2|^{p+1}\)dx\nonumber\\ &
+\int\limits_\Omega{1\over |x|}\( |   U_1 |^{p+1} +|    U_2|^{p+1}+|   V_1 |^{p+1}+|    V_2|^{p+1}\)dx \nonumber\\ &
 =\int\limits_\Omega{1\over |x|}\(|  PU_1-PU_2+\lambda\(PV_1-PV_2\)|^{p+1}-|   U_1 |^{p+1}-|    U_2|^{p+1}-|   V_1 |^{p+1}-|    V_2|^{p+1}\)dx\nonumber\\ &
+2\int\limits_\Omega{1\over |x|}\( |   U_1 |^{p+1} +|    U_2|^{p+1} \)dx,
\end{align}
because of the symmetry (see \eqref{ans21} and \eqref{ans31}).

The last two terms in \eqref{ley11} are estimated in (v) of Lemma \ref{lez1}.
Let
 $\tau$ as in \eqref{tau}.

We split the first integral as
\begin{align}\label{ley12}
&  \int\limits_\Omega{1\over |x|}\(|   PU_1-PU_2+\lambda\(PV_1-PV_2\)|^{p+1}-|   U_1 |^{p+1}-|    U_2|^{p+1}-|   V_1 |^{p+1}-|    V_2|^{p+1}\)dx\nonumber\\ &
=\int\limits_{B(\xi_1,\tau)}\dots+\int\limits_{B(\xi_2,\tau)}\dots+\int\limits_{B(-\xi_1,\tau)}\dots+\int\limits_{B(-\xi_2,\tau)}\dots\nonumber\\ &+\int\limits_{\Omega\setminus\(B(\xi_1,\tau)\cup B(\xi_2,\tau)\cup B(-\xi_1,\tau)\cup B(-\xi_2,\tau)\)}\dots
\end{align}

By Lemma \ref{lem2} we deduce

\begin{align*}
&  \int\limits_{\Omega\setminus\(B(\xi_1,\tau)\cup B(\xi_2,\tau)\cup B(-\xi_1,\tau)\cup B(-\xi_2,\tau)\)}\dots\nonumber\\ &
=O\( \int\limits_{\Omega\setminus\(B(\xi_1,\tau)\cup B(\xi_2,\tau)\cup B(-\xi_1,\tau)\cup B(-\xi_2,\tau)\)}  \(    U_1  ^{p+1}+ U_2 ^{p+1}+V_1  ^{p+1}+ V_2 ^{p+1}\)dx\)
\nonumber\\ &=O\({\delta_1^n\over\tau^n}+{\delta_2^n\over\tau^n}\)=o(\eps).
\end{align*}

We now estimate  the integral over $B(\xi_1,\tau)$ in \eqref{ley12}.
\begin{align}\label{ley14}
&   \int\limits_{  B(\xi_1,\tau) }{1\over |x|}\(|   PU_1-PU_2+\lambda\(PV_1-PV_2\)|^{p+1}-|   U_1 |^{p+1}-|    U_2|^{p+1}-|   V_1 |^{p+1}-|    V_2|^{p+1}\)dx\nonumber\\ &=
(p+1) \int\limits_{  B(\xi_1,\tau) }{1\over |x|}    U_1  ^ p\(  PU_1-U_1 - PU_2+\lambda\(PV_1-PV_2\)\) dx\nonumber\\ &+
{p(p+1)\over2} \int\limits_{  B(\xi_1,\tau) }{1\over |x|}   | U_1+\theta \rho |^{p-1} \rho^2dx
  -\int\limits_{  B(\xi_1,\tau) }{1\over |x|}\( |    U_2|^{p+1} +|   V_1 |^{p+1}-|    V_2|^{p+1}\)dx\nonumber\\ &=
(p+1) \int\limits_{  B(\xi_1,\tau) }{1\over |x|}    U_1  ^ p\(  PU_1-U_1 \)dx-(p+1) \int\limits_{  B(\xi_1,\tau) }{1\over |x|}    U_1  ^ p PU_2 dx+o(\eps),
\end{align}
where $\rho:= PU_1-U_1-PU_2+\lambda\(PV_1-PV_2\)$.
Indeed, by Lemma \ref{lem2}
one can easily deduce that
$$\int\limits_{  B(\xi_1,\tau) }{1\over |x|}    U_1  ^ p    \(PV_1-PV_2 \) dx, \int\limits_{  B(\xi_1,\tau) }{1\over |x|}  |    U_2|^{p+1}  dx,\int\limits_{  B(\xi_1,\tau) }{1\over |x|}
|   V_i |^{p+1} dx=o(\eps) $$
and also
\begin{align*}
&    {p(p+1)\over2} \int\limits_{  B(\xi_1,\tau) }{1\over |x|}   | U_1+\theta\rho|^{p-1} \rho^2dx\le c\int\limits_{  B(\xi_1,\tau) }   | U_1|^{p-1} \rho^2dx+\int\limits_{  B(\xi_1,\tau) }  |\rho|^{p+1}  dx	\\ &
\nonumber\\ &\le c\int\limits_{  B(\xi_1,\tau) }     U_1  ^{p-1} \(  PU_1-U_1 \)^2dx+c\int\limits_{  B(\xi_1,\tau) }     U_1  ^{p-1}\(PU_2\)^2dx+c
\int\limits_{  B(\xi_1,\tau) }     U_1  ^{p-1} \(PV_1-PV_2 \)^2dx
\nonumber\\ &+ c\int\limits_{  B(\xi_1,\tau) }|PU_1-U_1  |^{p+1}+c\int\limits_{  B(\xi_1,\tau) }|  U_2  |^{p+1}+c\int\limits_{  B(\xi_1,\tau) }\(|  V_1  |^{p+1}+|  V_2  |^{p+1}\)dx\nonumber\\ &
=o(\eps).
\end{align*}

The first term and the second term in \eqref{ley14} are estimated in (iii) and (iv) of Lemma \ref{lez1}, respectively.

 Therefore, the claim follows.
\end{proof}

\begin{lemma}\label{lem33}
It holds true that
\begin{align*}
& {1 \over p + 1 -\eps} \int\limits_\Omega {1\over |x|}|V_{\di,\ti}|^{p+1-\eps}={1 \over p + 1} \int\limits_\Omega {1\over |x|}|V_{\di,\ti}|^{p+1}   \\
&+  \(1+|\lambda|\)\[{\gamma_1\over (p+1)^2}-\alpha_n{\gamma_1\over (p+1) }-{\gamma_3\over (p+1) }\eps +{n-2\over2(p+1)}\(\ln\de_1+\ln\de_2\)\]+ o(\eps).
\end{align*}
\end{lemma}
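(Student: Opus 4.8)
The statement is a purely one–variable asymptotic computation: we must Taylor–expand the exponent $p+1-\eps$ around $\eps=0$ in the nonlinear energy term and identify all contributions up to order $\eps$. The plan is to write $|V_{\di,\ti}|^{p+1-\eps}=|V_{\di,\ti}|^{p+1}\,e^{-\eps\ln|V_{\di,\ti}|}=|V_{\di,\ti}|^{p+1}\bigl(1-\eps\ln|V_{\di,\ti}|+O(\eps^2\ln^2|V_{\di,\ti}|)\bigr)$ on the region where $V_{\di,\ti}$ is bounded away from $0$ and $\infty$, and to handle the remaining regions (where $\ln|V_{\di,\ti}|$ is large) by the crude bounds already used in Lemma \ref{lex1} and Lemma \ref{ley1}. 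Similarly expand the prefactor $\frac{1}{p+1-\eps}=\frac{1}{p+1}+\frac{\eps}{(p+1)^2}+O(\eps^2)$. Combining the two expansions,
\[
\frac{1}{p+1-\eps}\int_\Omega\frac{|V_{\di,\ti}|^{p+1-\eps}}{|x|}
=\frac{1}{p+1}\int_\Omega\frac{|V_{\di,\ti}|^{p+1}}{|x|}
+\frac{\eps}{(p+1)^2}\int_\Omega\frac{|V_{\di,\ti}|^{p+1}}{|x|}
-\frac{\eps}{p+1}\int_\Omega\frac{|V_{\di,\ti}|^{p+1}\ln|V_{\di,\ti}|}{|x|}
+o(\eps),
\]
so everything reduces to evaluating, at leading order, $\int_\Omega\frac{|V_{\di,\ti}|^{p+1}}{|x|}\,dx$ and $\int_\Omega\frac{|V_{\di,\ti}|^{p+1}\ln|V_{\di,\ti}|}{|x|}\,dx$.

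For both integrals I would localize near the four concentration points $\pm\xi_i$, exactly as in the previous two lemmas: outside $\bigcup B(\pm\xi_i,\tau)$ the integrand is $O(\delta_1^n/\tau^n+\delta_2^n/\tau^n)=o(\eps)$ (the logarithm only costs a harmless $|\ln\delta_i|$ factor, still $o(\eps)$), and inside each ball the cross terms and the $PV_j$ terms contribute $o(\eps)$ by Lemma \ref{lem2}, leaving only the single bubble $U_i$ (resp. $V_i$). On $B(\xi_1,\tau)$ one has $\frac{1}{|x|}=\frac{1}{|\xi_1|}(1+O(\tau))=1+O(\eps)$ since $|\xi_1|=1+\tau_1$, and $\int_{B(\xi_1,\tau)}U_1^{p+1}=\gamma_1+o(\eps)$, while the logarithmic integral is, after the change of variables $x=\xi_1+\delta_1 y$,
\[
\int_{B(\xi_1,\tau)}U_1^{p+1}\ln U_1\,dx
=\alpha_n^{p+1}\int_{\rr^n}\frac{1}{(1+|y|^2)^n}\Bigl(\ln\alpha_n+\ln\frac{\delta_1^{-\frac{n-2}{2}}}{(1+|y|^2)^{\frac{n-2}{2}}}\Bigr)dy+o(1)
=\gamma_1\ln\alpha_n-\frac{n-2}{2}\gamma_1\ln\delta_1+\gamma_3+o(1),
\]
using definitions \eqref{g1} and \eqref{g3}. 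Summing the four balls (the two with $U_i$ give this, the two with $V_i=V_{\mu_i,\eta_i}$ give the same with $\delta_i=\mu_i$, and they are present only when $\lambda\neq0$, hence the factor $1+|\lambda|$), and plugging into the displayed combination above, the $\frac{1}{(p+1)^2}\int|V|^{p+1}$ term produces the $\frac{\gamma_1}{(p+1)^2}(1+|\lambda|)$ piece, and $-\frac{1}{p+1}\int|V|^{p+1}\ln|V|$ produces $-\frac{1+|\lambda|}{p+1}\bigl(\gamma_1\ln\alpha_n+\gamma_3\bigr)+\frac{(1+|\lambda|)(n-2)}{2(p+1)}(\ln\delta_1+\ln\delta_2)$, which is exactly the claimed bracket after renaming $\ln\alpha_n$ as the constant $\alpha_n$ in the statement.

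The only genuinely delicate point is controlling the logarithm near the zeros of $V_{\di,\ti}$: since $V$ is a difference of projected bubbles it vanishes on a hypersurface, where $\ln|V|\to-\infty$, so the pointwise expansion $|V|^{p+1-\eps}=|V|^{p+1}(1-\eps\ln|V|+\dots)$ breaks down. This is handled by splitting $\Omega$ into $\{|V|\ge 1\}$ and $\{|V|<1\}$: on the latter $|V|^{p+1-\eps}\le |V|^{p+1-\eps_0}\le$ an integrable majorant uniformly in small $\eps$, so dominated convergence gives $\int_{\{|V|<1\}}\frac{|V|^{p+1-\eps}}{|x|}\to\int_{\{|V|<1\}}\frac{|V|^{p+1}}{|x|}$, and in fact this whole region contributes only $o(\eps)$ once one notes that $\{|V|<1\}\cap B(\pm\xi_i,\tau)$ has measure $o(1)$ as the bubbles concentrate; on $\{|V|\ge1\}$ one has $0\le\ln|V|\le c\,(|\ln\delta_i|+\ln(1+|y|))$ after rescaling, which is controlled by the already-established decay estimates. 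Apart from this bookkeeping, the argument is routine and proceeds exactly as the analogous expansions in \cite{BLR,BMP,ACP}; I would simply cite those for the omitted estimates and carry out the change of variables above to pin down the constants $\gamma_1,\gamma_3$.
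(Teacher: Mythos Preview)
Your proposal is correct and follows exactly the standard route: expand $\frac{1}{p+1-\eps}$ and $|V|^{p+1-\eps}=|V|^{p+1}e^{-\eps\ln|V|}$ to first order in $\eps$, localize near the concentration points, and evaluate the resulting integrals $\int U_i^{p+1}$ and $\int U_i^{p+1}\ln U_i$ by the rescaling $x=\xi_i+\delta_i y$, which produces the constants $\gamma_1$, $\gamma_3$ and the $\ln\delta_i$ terms. The paper does not give an independent argument for this lemma; it simply refers to Lemma~3.2 of \cite{DFM}, whose proof is precisely the computation you outline, so your approach coincides with the one the paper invokes. Your observation that the coefficient written as ``$\alpha_n$'' in the stated formula must actually be $\ln\alpha_n$ is also correct---this is a typographical slip in the statement, as your computation shows.
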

\begin{proof}
We argue exactly as in Lemma 3.2 of \cite{DFM}.
\end{proof}

\begin{lemma}\label{lez1} Let $\tau$ as in \eqref{tau}.
It holds true that
\begin{itemize}
\item[(i)] $$\int\limits_{  B(\xi_1,\tau) }    U_1  ^ p\(  PU_1-U_1\)dx=-\gamma_2\({\delta_1\over2\tau_1}\)^{n-2}+o(\eps) $$
\item[(ii)] $$\int\limits_{  B(\xi_1,\tau) }     U_1  ^ pPU_2dx= \gamma_2\({\delta_1\delta_2 }\)^{n-2\over2}\({1\over|\tau_1-\tau_2|^{n-2}}-{1\over|\tau_1+\tau_2|^{n-2}}\)+o(\eps)$$
\item[(iii)] $$\int\limits_{  B(\xi_1,\tau) }{1\over |x|}    U_1  ^ p\(  PU_1-U_1\)dx=-\gamma_2\({\delta_1\over2\tau_1}\)^{n-2}+o(\eps) $$
\item[(iv)] $$\int\limits_{  B(\xi_1,\tau) }{1\over |x|}    U_1  ^ pPU_2dx=-\gamma_2\({\delta_1\over2\tau_1}\)^{n-2}+o(\eps)$$
\item[(v)] $$\int\limits_\Omega{1\over |x|}U_1 ^{p+1}dx=\gamma_1-\gamma_1\tau_1+o(\eps).$$

\end{itemize}
\end{lemma}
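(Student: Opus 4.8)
The five estimates are of the standard kind occurring in Ljapunov--Schmidt expansions, and the whole computation rests on the pointwise control of $\varphi_i:=U_i-PU_i$ recalled in the Appendix (Lemma \ref{lem2}): one has $0\le\varphi_i\le U_i$ together with the refined expansion $\varphi_i(x)=\alpha_n\delta_i^{n-2\over2}H(x,\xi_i)+\hbox{l.o.t.}$, where $H$ is the regular part of the Green function of $-\Delta$ in $\Omega$. The key geometric remark is that, since $\xi_i=(1+\tau_i)\xi_0$ with $\tau_i=\eps t_i$ and $\delta_i=\eps^{n-1\over n-2}d_i$, one has $\delta_i/\tau_i=\eps^{1\over n-2}(d_i/t_i)\to0$, so the bubbles are far more concentrated than their distance to $\partial\Omega$. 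Moreover $\xi_i$ sits at distance $O(\eps)$ from the inner sphere $\{|x|=1\}$ and is bounded away from the outer one, so near $\xi_i$ the regular part $H(\cdot,\xi_i)$ is, to leading order, the Kelvin reflection across $\{|x|=1\}$: with $\xi_i^\ast:=\xi_i/|\xi_i|^2$ the difference $H(x,\xi_i)-|\xi_i|^{-(n-2)}|x-\xi_i^\ast|^{-(n-2)}$ is harmonic in $\Omega$, vanishes on $\{|x|=1\}$ and is bounded on $\{|x|=r\}$, hence is $O(1)$ on $\Omega$, which is negligible next to a term of size $(2\tau_i)^{-(n-2)}\to+\infty$. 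Since $\xi_1$ and $\xi_2$ both lie on the ray $\rr_+\xi_0$, we have $|\xi_1-\xi_2|=|\tau_1-\tau_2|$, $|\xi_i-\xi_i^\ast|=2\tau_i+O(\tau_i^2)$ and $|\xi_1-\xi_2^\ast|=\tau_1+\tau_2+O(\eps^2)$, which is precisely what manufactures the combinations $(2\tau_1)^{-(n-2)}$ and $|\tau_1-\tau_2|^{-(n-2)}-|\tau_1+\tau_2|^{-(n-2)}$ appearing in the statement.

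To prove (i) I would write $\int_{B(\xi_1,\tau)}U_1^p(PU_1-U_1)=-\int_{B(\xi_1,\tau)}U_1^p\varphi_1$ and freeze $\varphi_1$ at $\xi_1$: the constant term gives $-\varphi_1(\xi_1)\int_{B(\xi_1,\tau)}U_1^p$, the linear Taylor term vanishes identically by radial symmetry of $U_1^p$ about $\xi_1$, and the quadratic remainder is $O\big((\delta_1/\tau_1)^2\log(\tau/\delta_1)\big)$ times the leading term, hence $o(\eps)$. Since $\varphi_1(\xi_1)=\alpha_n\delta_1^{n-2\over2}(2\tau_1)^{-(n-2)}(1+o(1))$ by the remark above and $\int_{B(\xi_1,\tau)}U_1^p=\int_{\rr^n}U_1^p+o(\eps)=\alpha_n^p\delta_1^{n-2\over2}\int_{\rr^n}(1+|y|^2)^{-{n+2\over2}}dy+o(\eps)$, recalling \eqref{g2} this gives $-\gamma_2(\delta_1/2\tau_1)^{n-2}+o(\eps)$. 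For (ii), on $B(\xi_1,\tau)$ the point is at distance $\ge\tfrac12|\xi_1-\xi_2|$ from $\xi_2$, so $U_2$ and $\varphi_2$ are smooth there; freezing $PU_2=U_2-\varphi_2$ at $\xi_1$ and using $U_2(\xi_1)=\alpha_n\delta_2^{n-2\over2}|\tau_1-\tau_2|^{-(n-2)}(1+o(1))$ and $\varphi_2(\xi_1)=\alpha_n\delta_2^{n-2\over2}|\tau_1+\tau_2|^{-(n-2)}(1+o(1))$ gives $PU_2=\alpha_n\delta_2^{n-2\over2}\big(|\tau_1-\tau_2|^{-(n-2)}-|\tau_1+\tau_2|^{-(n-2)}\big)(1+o(1))$ on $B(\xi_1,\tau)$; multiplying by $U_1^p$ and integrating (the linear Taylor term again dropping by symmetry, the quadratic one being $o(\eps)$ by scale separation), and using \eqref{g2}, one lands on $\gamma_2(\delta_1\delta_2)^{n-2\over2}\big(|\tau_1-\tau_2|^{-(n-2)}-|\tau_1+\tau_2|^{-(n-2)}\big)+o(\eps)$.

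Items (iii) and (iv) then follow from (i) and (ii): on $B(\xi_1,\tau)$ one has $|x|=|\xi_1|+O(\tau)=1+O(\eps)$, hence $1/|x|=1+O(\eps)$, and since the integrals in (i)--(ii) are $O(\eps)$, inserting this factor changes them by $O(\eps)\cdot O(\eps)=o(\eps)$. For (v) I would rescale $x=\xi_1+\delta_1 y$ (so that $U_1^{p+1}dx=\alpha_n^{p+1}{dy\over(1+|y|^2)^n}$) to obtain
$$\int_\Omega{1\over|x|}U_1^{p+1}dx=\alpha_n^{p+1}\int_{(\Omega-\xi_1)/\delta_1}{1\over|\xi_1+\delta_1 y|}\,{dy\over(1+|y|^2)^n},$$
and expand ${1\over|\xi_1+\delta_1 y|}={1\over|\xi_1|}\big(1-\delta_1{\xi_1\cdot y\over|\xi_1|^2}+O(\delta_1^2|y|^2)\big)$; the linear term integrates to zero by oddness (the rescaled domain exhausting $\rr^n$ up to a set contributing $o(\eps)$) and the quadratic remainder integrates to $O(\delta_1^2)=o(\eps)$ because $\int_{\rr^n}|y|^2(1+|y|^2)^{-n}dy<\infty$ for $n>2$. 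With \eqref{g1} this yields $\gamma_1/|\xi_1|+o(\eps)=\gamma_1/(1+\tau_1)+o(\eps)=\gamma_1-\gamma_1\tau_1+o(\eps)$, since $\tau_1=O(\eps)$.

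The only genuinely delicate point is the behaviour of $H(\cdot,\xi_i)$ in the degenerate regime $\tau_i\sim\eps$, $\delta_i\ll\tau_i$: one must be sure that the Kelvin image across the inner sphere reproduces $H(\cdot,\xi_i)$ on $B(\xi_i,\tau)$ up to a uniformly bounded harmonic error, and that all the freezing/Taylor steps really leave only $o(\eps)$ remainders --- both of which come down to the scale separation $\delta_i/\tau_i\to0$ and to the exact cancellation of the first-order terms furnished by the radial symmetry of $U_i^p$ and by oddness. The rest is a careful bookkeeping of powers of $\eps$ via \eqref{ans21}--\eqref{ans31} and of the integrals \eqref{g1}--\eqref{g2}, as in the references cited in the preceding lemmas.
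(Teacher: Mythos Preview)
Your argument is correct and follows essentially the same route as the paper: both expand $PU_i-U_i$ via the regular part $H$ of the Green function, rescale $x=\xi_1+\delta_1 y$, and pass to the limit using the boundary--reflection description of $H$ (your Kelvin image $\xi_i^\ast=\xi_i/|\xi_i|^2$ and the paper's flat reflection $\bar\xi_i=(1-\tau_i)\xi_0$ agree to the needed order). The only stylistic difference is that you freeze $\varphi_i$ (resp.\ $PU_2$) at $\xi_1$ by a Taylor step and kill the linear term by symmetry, whereas the paper keeps $H$ inside the integral and invokes dominated convergence (its Lemma~\ref{lem3}); your shortcut $1/|x|=1+O(\eps)$ on $B(\xi_1,\tau)$ for (iii)--(iv) is cleaner than the paper's ``argue as in (i)--(ii)'' but equivalent.

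One remark: your derivation of (iv) yields the expression from (ii), namely $\gamma_2(\delta_1\delta_2)^{{n-2\over2}}\big(|\tau_1-\tau_2|^{-(n-2)}-|\tau_1+\tau_2|^{-(n-2)}\big)+o(\eps)$, not the displayed $-\gamma_2(\delta_1/2\tau_1)^{n-2}+o(\eps)$. This is as it should be --- the right-hand side printed in (iv) is a typographical slip (compare how (iv) is used in Lemma~\ref{ley1}) --- so you have in fact proved the correct statement.
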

\begin{proof}

{\em Proof of (i)}
By Lemma \ref{lem2} we get
\begin{align*}
 &\int\limits_{B(\xi_1,\tau )} U_1^p \(  PU_1-U_1\)dx=  \int\limits_{B(\xi_1,\tau )} U_1^p \( -\alpha_n\delta_1^{n-2\over 2}H(x,\xi_1)+R_{\delta_1,\xi_1}\)dx\nonumber \\
 =&  -\alpha_n\delta_1^{n-2\over 2}\int\limits_{B(\xi_1,\tau )} U_1^pH(x,\xi_1)dx+\int\limits_{B(\xi_1,\tau )} U_1^{p }R_{\delta_1,\xi_1} dx,
\end{align*}
with
$$  \int\limits_{B(\xi_1,\tau )} U_1^{p }R_{\delta_1,\xi_1} dx= O\(\({\delta_1\over\tau_1}\)^n\).
$$
By Lemma \ref{lem3} we get
\begin{align*}
&\alpha_n\delta_1^{n-2\over 2}\int\limits_{B(\xi_1,\tau )} U_1^pH(x,\xi_1)dx =\alpha_n^{p+1}\delta_1^{n-2}\int\limits_{B(0,\tau /\delta_1)} H(\delta_1y+\xi_1,\xi_1){1\over (1+|y|^2)^{n+2\over2}}dy\nonumber\\ & =
\alpha_n^{p+1}\({\delta_1\over\tau_1}\)^{n-2} \int\limits_{B(0,\tau /\delta_1)}\tau_1^{n-2}H( \delta_1y+\xi_1,\xi_1){1\over (1+|y|^2)^{n+2\over2}}dy\nonumber\\&=\alpha_n^{p+1}\({\delta_1\over\tau_1}\)^{n-2} \[{1\over 2^{n-2}}\int\limits_{\rr^n)} {1\over(1+|y|^2)^{n+2\over2}}dy+o(1)\].
\end{align*}

{\em Proof of (ii)}
By Lemma \ref{lem2} and Lemma \ref{lem3} we get
\begin{align*}
 &\int\limits_{B(\xi_1,\tau )}    U_1^p  PU_2dx=\int\limits_{B(\xi_1,\tau )}    U_1^p \(U_2-\alpha_n\delta_2^{n-2\over2}H(x,\xi_2)+R_{\delta_2,\xi_2}\)dx\nonumber \\&
 =\alpha_n^{p+1} (\delta_1\delta_2)^{n-2\over2}\int\limits_{B(0,\tau /\delta_1)}{1\over(1+|y|^2)^{n+2\over2}}
 {1\over(\delta_2^2+|\delta_1y+\xi_1-\xi_2|^2)^{n-2\over2}}dy\nonumber\\
 & -\alpha_n^{p+1} (\delta_1\delta_2)^{n-2\over2}\int\limits_{B(0,\tau /\delta_1)}{1\over(1+|y|^2)^{n+2\over2}} H(\delta_1y+\xi_1,\xi_2) dy\nonumber \\ &+\alpha_n^{p+1} (\delta_1\delta_2)^{n-2\over2}\int\limits_{B(0,\tau /\delta_1)}{1\over(1+|y|^2)^{n+2\over2}}
 R_{\delta_2,\xi_2}(\delta_1y+\xi_1)dy=\nonumber \\&
 =\alpha_n^{p+1} {(\delta_1\delta_2)^{n-2\over2}\over |\tau_1-\tau_2|^{n-2}}\int\limits_{B(0,\tau /\delta_1)}{ 1\over(1+|y|^2)^{n+2\over2}}
 {|\tau_1-\tau_2|^{n-2}\over(\delta_2^2+|\delta_1y+\xi_1-\xi_2|^2)^{n-2\over2}}dy\nonumber\\
 & -\alpha_n^{p+1} {(\delta_1\delta_2)^{n-2\over2}\over  |\tau_1+\tau_2|^{n-2}}\int\limits_{B(0,\tau /\delta_1)}{|\tau_1+\tau_2|^{n-2}\over(1+|y|^2)^{n+2\over2}} H(\delta_1y+\xi_1,\xi_2) dy\nonumber \\ &+O\( (\delta_1\delta_2)^{n-2\over2}{\delta_2^{n+2\over2}\over\tau_2^{n}}\)=\nonumber \\&
 =\alpha_n^{p+1} {(\delta_1\delta_2)^{n-2\over2}\over |\tau_1-\tau_2|^{n-2}}\[\int\limits_{\rr^n}{ 1\over(1+|y|^2)^{n+2\over2}}dy+o(1)\]
  \nonumber\\
 & -\alpha_n^{p+1} {(\delta_1\delta_2)^{n-2\over2}\over  |\tau_1+\tau_2|^{n-2}}\[\int\limits_{\rr^n}{1\over(1+|y|^2)^{n+2\over2}} dy +o(1)\]\nonumber \\ &+o\( {(\delta_1\delta_2)^{n-2\over2} \over\tau_2^{n-2}}\).
   \end{align*}

{\em Proof of (iii) and (iv)}
We argue as in the proof of (i) and (ii) using estimates \eqref{lew15} and \eqref{lew16}.

{\em Proof of (v)}
We have
\begin{align}\label{lew11}
\int\limits_\Omega{1\over |x|}U_1 ^{p+1}dx=\int\limits_{B(\xi_1,\tau)}U_1 ^{p+1}dx+ \int\limits_{\Omega\setminus B(\xi_1,\tau)}U_1 ^{p+1}dx,\end{align}
with
$$
   \int\limits_{\Omega\setminus B(\xi_1,\tau)} {1\over |x|}U_1 ^{p+1}dx= O\({\delta_1^n\over\tau^n}\),$$
So, we only have to estimate the first term in \eqref{lew11}.
We split it as
$$  \int\limits_{B(\xi_1,\tau)}  {1\over |x|} U_1 ^{p+1}dx= \int\limits_{B(\xi_1,\tau)} U_1 ^{p+1}dx+\int\limits_{B(\xi_1,\tau)}\({1\over |x|}-1\)U_1 ^{p+1}dx.$$
 We have
$$
\int\limits_{B(\xi_1,\tau)} U_1 ^{p+1}dx=\gamma_1+O\({\delta_1^n\over\tau^n}\).$$
Since $\xi_1=\xi_0(1+\tau_1)$ and $|\xi_0|=1$, by the mean value theorem we get
\begin{align}\label{lew15}
{1\over|\delta_1 y+\tau_1\xi_0+\xi_0|}-1=-\tau_1-\delta_1 \<y, \xi_0\>+R(y),\end{align}
where $R$ satisfies the uniform estimate
\begin{align}\label{lew16}
|R(y )|\le c\(\delta_1^2|y|^2+\delta_1\tau_1|y|+\tau_1^2\)\ \hbox{for any}\ y\in B(0,\tau/\delta_1).\end{align}
Therefore we conclude
\begin{align*}
&   \int\limits_{B(\xi_1,\tau)}\({1\over |x|}-1\)U_1 ^{p+1}dx= \alpha_n^{p+1}\int\limits_{B(0,\tau/\delta_1)}\({1\over |\delta_1 y+\tau_1\xi_0+\xi_0|}-1\){1\over(1+|y|^2)^{n}}dy\nonumber\\ &=\alpha_n^{p+1}\int\limits_{B(0,\tau/\delta_1)}\(-\tau_1-\delta_1\tau_1\<y, \xi_0\>+R(y)\){1\over(1+|y|^2)^{n}}dy
=-\gamma_1\tau_1+o(\tau).\end{align*}
Collecting all the previous estimates we get the claim.
\end{proof}

\begin{lemma}\label{lem3} Let $\tau$ as in \eqref{tau}.
It holds true that
\begin{itemize}\item[(i)]$$\int\limits_{B(0,\tau /\delta_1)}\tau_1^{n-2}H( \delta_1y+\xi_1,\xi_1){1\over (1+|y|^2)^{n+2\over2}}dy={1\over 2^{n-2}}\int\limits_{\rr^n} {1\over(1+|y|^2)^{n+2\over2}}dy+o(1) ,$$
\item[(ii)]$$\int\limits_{B(0,\tau /\delta_1)}{|\tau_1+\tau_2|^{n-2}\over(1+|y|^2)^{n+2\over2}} H(\delta_1y+\xi_1,\xi_2) dy=  \int\limits_{\rr^n} {1\over(1+|y|^2)^{n+2\over2}}dy+o(1) ,$$
    \item[(iii)]$$\int\limits_{B(0,\tau /\delta_1)}{ 1\over(1+|y|^2)^{n+2\over2}}
 {|\tau_1-\tau_2|^{n-2}\over(\delta_2^2+|\delta_1y+\xi_1-\xi_2|^2)^{n-2\over2}}dy=\int\limits_{\rr^n} {1\over(1+|y|^2)^{n+2\over2}}dy+o(1)  .$$
        \end{itemize}
\end{lemma}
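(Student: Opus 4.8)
The plan is to derive all three identities from the dominated convergence theorem. Each of the integrals is of the form $\int_{B(0,\tau/\delta_1)}g_\eps(y)\,\frac{dy}{(1+|y|^2)^{n+2\over2}}$, where $g_\eps$ is the positive weight multiplying the Aubin--Talenti profile: $g_\eps(y)=\tau_1^{n-2}H(\delta_1y+\xi_1,\xi_1)$ in (i), $g_\eps(y)=|\tau_1+\tau_2|^{n-2}H(\delta_1y+\xi_1,\xi_2)$ in (ii), and $g_\eps(y)=|\tau_1-\tau_2|^{n-2}\big(\delta_2^2+|\delta_1y+\xi_1-\xi_2|^2\big)^{-{n-2\over2}}$ in (iii). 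I would verify that (a) for each fixed $y$, $g_\eps(y)$ converges, as $\eps\to0$, to the constant appearing on the right-hand side --- namely $2^{-(n-2)}$ in (i) and $1$ in (ii) and (iii) --- and that (b) $g_\eps$ is bounded by a constant, uniformly for $y\in B(0,\tau/\delta_1)$ and for $\eps$ small on a fixed compact subset of $\Lambda$. Granting this, since $\tau/\delta_1=\eps^{-1/(n-2)}\,O(1)\to+\infty$ by \eqref{ans21} and \eqref{ans31}, the domain of integration exhausts $\rr^n$, the majorant $C(1+|y|^2)^{-{n+2\over2}}$ lies in ${\rm L}^1(\rr^n)$, and the three identities follow.

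Statement (iii) requires no information on $H$. Since $\xi_1-\xi_2=(\tau_1-\tau_2)\xi_0$,
$$\delta_2^2+|\delta_1y+\xi_1-\xi_2|^2=(\tau_1-\tau_2)^2+2\delta_1(\tau_1-\tau_2)\<y,\xi_0\>+\delta_1^2|y|^2+\delta_2^2,$$
and because $\delta_i/|\tau_1-\tau_2|=\eps^{1/(n-2)}d_i/|t_1-t_2|\to0$, for fixed $y$ the last three terms are $o\big((\tau_1-\tau_2)^2\big)$; hence $g_\eps(y)\to1$. For (b), by \eqref{tau} one has $\tau\le|\tau_1-\tau_2|/2$, so on $B(0,\tau/\delta_1)$ it holds $|\delta_1y+\xi_1-\xi_2|\ge|\tau_1-\tau_2|-|\delta_1y|\ge|\tau_1-\tau_2|/2$, whence $g_\eps(y)\le 2^{n-2}$. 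This is precisely the classical estimate for the interaction of two bubbles whose mutual distance is large with respect to their concentration parameters.

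For (i) and (ii) one uses the known behaviour of the regular part $H$ near $\partial\Omega$ recalled in the Appendix, together with the elementary bound $0\le H(x,\zeta)\le|x-\zeta|^{2-n}$ and the Harnack inequality. Since $\xi_i=(1+\tau_i)\xi_0$ with $|\xi_0|=1$, the point $\xi_i$ lies at distance $\tau_i$ from the inner sphere $\{|x|=1\}$ of $\Omega$, and $\delta_1y+\xi_1\to\xi_0\in\partial\Omega$ for fixed $y$. In (i), the asymptotics of the Robin function give $\tau_1^{n-2}H(\delta_1y+\xi_1,\xi_1)\to 2^{-(n-2)}$ (the distance of $\xi_1$ to $\partial\Omega$ being $\tau_1$, and $\delta_1|y|$ negligible with respect to $\tau_1$); for the bound, Harnack on $B(\xi_1,\tau_1/2)$ combined with $H(x,\xi_1)\le|x-\xi_1|^{2-n}$ yields $H(\delta_1y+\xi_1,\xi_1)\le C\tau_1^{2-n}$ on all of $B(\xi_1,\tau)$, since $\tau\le\tau_1$, so $g_\eps(y)\le C$. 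In (ii), the off-diagonal expansion $H(x,\zeta)=|x-\zeta^*|^{2-n}(1+o(1))$, with $\zeta^*$ the reflection of $\zeta$ across $\partial\Omega$, gives $|\tau_1+\tau_2|^{n-2}H(\delta_1y+\xi_1,\xi_2)\to1$: the reflected image of $\xi_2$ near $\xi_0$ equals $(1-\tau_2)\xi_0+O(\tau_2^2)$, hence it lies at distance $\tau_1+\tau_2+O(\tau_1^2+\tau_2^2)$ from $\delta_1y+\xi_1$. For the bound, on $B(0,\tau/\delta_1)$ we have $|\delta_1y+\xi_1-\xi_2|\ge|\tau_1-\tau_2|/2$ by \eqref{tau}, so $g_\eps(y)\le 2^{n-2}\big(|\tau_1+\tau_2|/|\tau_1-\tau_2|\big)^{n-2}\le C$, the last step because $|\tau_1+\tau_2|/|\tau_1-\tau_2|=|t_1+t_2|/|t_1-t_2|$ is bounded on compact subsets of $\Lambda$.

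The only point that is not entirely routine is (b): the weight $g_\eps$ is large --- of order $\tau_1^{2-n}$ near $y=0$ in (i) and (ii), and of the size of the second bubble at its peak near $\delta_1y\approx\xi_2-\xi_1$ in (iii) --- and one must check that it nonetheless stays bounded on the whole ball $B(0,\tau/\delta_1)$. This is exactly what the choice of the small radius $\tau$ in \eqref{tau} guarantees, by keeping $\delta_1y+\xi_1$ at distance $\gtrsim\tau_1$ from $\partial\Omega$ and, in case (iii), at distance $\gtrsim|\tau_1-\tau_2|$ from $\xi_2$. Everything else is of the same standard nature as in this context; see for instance \cite{ACP,BMP,MP}.
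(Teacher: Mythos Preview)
Your proof is correct and follows the same strategy as the paper's: dominated convergence, with pointwise convergence of the weight $g_\eps$ and a uniform bound guaranteed by the choice of $\tau$ in \eqref{tau}. The only real difference is in how you justify the uniform bound on $H$ in part~(i): the paper invokes the reflection estimate of Lemma~\ref{lemacca} directly, obtaining $H(\delta_1y+\xi_1,\xi_1)\le C_2|\delta_1y+2\tau_1\xi_0|^{2-n}\le C_2\tau_1^{2-n}$ on $B(0,\tau/\delta_1)$ in one stroke, whereas you combine Harnack on a ball of radius $\sim\tau_1$ with the crude bound $H(x,\xi_1)\le|x-\xi_1|^{2-n}$. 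Your argument works but, as written, Harnack on $B(\xi_1,\tau_1/2)$ only controls $H$ on a slightly smaller ball; you should add the observation that outside $B(\xi_1,\tau_1/4)$ the crude bound $H(x,\xi_1)\le|x-\xi_1|^{2-n}\le(4/\tau_1)^{n-2}$ already suffices, so the two pieces together cover all of $B(\xi_1,\tau)$ (recall $\tau$ can equal $\tau_1$). For (ii) your bound $g_\eps\le 2^{n-2}\big(|t_1+t_2|/|t_1-t_2|\big)^{n-2}$ via $H(x,\xi_2)\le|x-\xi_2|^{2-n}$ is perfectly fine; the paper instead uses $H(\delta_1y+\xi_1,\xi_2)\le C_2/\tau_2^{n-2}$ from the reflection lemma, which gives a slightly different but equally adequate constant.
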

\begin{proof}
We are going to use Lebesgue's dominated convergence Theorem together with Lemma \ref{lemacca} .
First of all,   taking into account that $ \xi_1=(1+\tau_1)\xi_0$ and $ \bar \xi_1=(1-\tau_1)\xi_0$ we deduce that
$$\tau_1^{n-2}H( \delta_1y+\xi_1,\xi_1){1\over (1+|y|^2)^{n+2\over2}}\to {1\over 2^{n-2}}{1\over (1+|y|^2)^{n+2\over2}}\ \hbox{a.e. in}\ \rr^n $$
and also that
$$ H( \delta_1y+\xi_1,\xi_1) \le C_2 {1\over|\delta_1y+\xi_1-\bar \xi_1|^{n-2}}=C_2{1\over|\delta_1y+2\tau_1\xi_0|^{n-2}}\le C_2{1\over\tau_1^{n-2}},$$
since
$$|\delta_1y+2\tau\xi_0|\ge 2\tau_1-|\delta_1y|\ge \tau_1\ \hbox{for any} \ y\in B(0,\tau /\delta_1).$$
 That proves (i).

 In a similar way, taking into account that $ \xi_1=(1+\tau_1)\xi_0$ and $ \bar \xi_2=(1-\tau_2)\xi_0$
 we get
 $$(\tau_2+\tau_1)^{n-2}H( \delta_1y+\xi_1,\xi_2){1\over (1+|y|^2)^{n+2\over2}}\to  {1\over (1+|y|^2)^{n+2\over2}}\ \hbox{a.e. in}\ \rr^n $$
and also that
$$ H( \delta_1y+\xi_1,\xi_2) \le C_2 {1\over|\delta_1y+\xi_1-\bar \xi_2|^{n-2}}=C_2{1\over|\delta_1y+(\tau_1+\tau_2)\xi_0|^{n-2}}\le C_2{1\over\tau_2^{n-2}},$$
since
$$|\delta_1y+(\tau_1+\tau_2)\xi_0|\ge   \tau_1+\tau_2 -|\delta_1y|\ge \tau_2\ \hbox{for any} \ y\in B(0,\tau /\delta_1).$$
 That proves (ii).

Finally, we have
$${ 1\over(1+|y|^2)^{n+2\over2}}
 {|\tau_1-\tau_2|^{n-2}\over(\delta_2^2+|\delta_1y+\xi_1-\xi_2|^2)^{n-2\over2}}\to  {1\over (1+|y|^2)^{n+2\over2}}\ \hbox{a.e. in}\ \rr^n $$
and also that
$$
 {1\over(\delta_2^2+|\delta_1y+\xi_1-\xi_2|^2)^{n-2\over2}}\le {1\over  |\delta_1y+\xi_1-\xi_2| ^{n-2 }}\le {2^{n-2}\over|\tau_1-\tau_2|^{n-2}}$$
since
$$|\delta_1y+\xi_1-\xi_2| \ge |\xi_1-\xi_2| -|\delta_1y|\ge {|\xi_1-\xi_2|\over2}  \ \hbox{for any} \ y\in B(0,\tau /\delta_1).$$
 That proves (iii).

 \end{proof}

\appendix
\section{}
 Here we recall some well known facts which are useful to get estimates in Section \ref{due}.

We denote by $G(x,y)$ the Green's function associated to $-\Delta$ with Dirichlet boundary condition and $H(x,y)$ its regular part, i.e.
$$-\Delta_x G(x,y)  = \delta_y (x)  \quad \text{for } x \in \Omega,\quad
G(x,y)   = 0   \quad \text{for } x \in \partial\Omega,
$$
and
$$G(x,y) = \gamma_n \( \frac{1}{|x-y|^{n-2}} - H(x,y) \) \quad \text{ where} \quad \gamma_n = \frac{1}{(n-2)|S^{n-1}|}$$
($|S^{n-1}| = (2 \pi^{n/2})/\ \Gamma(n/2)$ denotes the Lebesgue measure of the $(n-1)$-dimensional unit sphere).

The following   lemma was proved in \cite{R}.

\begin{lemma}\label{lem2}
It holds true that
$$PU_{\de,\xi}(x)=U_{\de,\xi}(x)-\alpha_n\delta^{n-2\over2} H(x,\xi)+O\({\de^{n+2\over2}\over {\rm dist}(\xi,\partial\Omega)^n}\)$$
for any $x\in\Omega.$\end{lemma}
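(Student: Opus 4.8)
The plan is the classical one (this is the Rey estimate, see \cite{R}): the difference $U_{\de,\xi}-PU_{\de,\xi}$ is harmonic in $\Omega$, hence controlled by its boundary values, and on $\partial\Omega$ these values are compared with the regular part $H(\cdot,\xi)$ of the Green's function, which is harmonic as well.

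First I would observe that, by the very definition of the projection, $\Delta PU_{\de,\xi}=\Delta U_{\de,\xi}$ in $\Omega$, so that $U_{\de,\xi}-PU_{\de,\xi}$ is harmonic in $\Omega$ and coincides with $U_{\de,\xi}$ on $\partial\Omega$ (where $PU_{\de,\xi}\equiv0$). On the other hand, from $G(x,\xi)=\gamma_n\bigl(|x-\xi|^{2-n}-H(x,\xi)\bigr)$ together with $-\Delta_xG(\cdot,\xi)=\delta_\xi$ and $G(\cdot,\xi)=0$ on $\partial\Omega$, one reads off that $H(\cdot,\xi)$ is harmonic in $\Omega$ and equals $|x-\xi|^{-(n-2)}$ on $\partial\Omega$. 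Consequently
$$z_{\de,\xi}(x):=U_{\de,\xi}(x)-PU_{\de,\xi}(x)-\alpha_n\de^{n-2\over2}H(x,\xi)$$
is harmonic in $\Omega$ and continuous up to $\partial\Omega$, so the maximum principle gives $\|z_{\de,\xi}\|_{L^\infty(\Omega)}=\|z_{\de,\xi}\|_{L^\infty(\partial\Omega)}$.

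It then remains to bound $z_{\de,\xi}$ on $\partial\Omega$. Writing $r:=|x-\xi|$, for $x\in\partial\Omega$ we have
$$z_{\de,\xi}(x)=\alpha_n\de^{n-2\over2}\left({1\over(\de^2+r^2)^{n-2\over2}}-{1\over r^{n-2}}\right)=-\alpha_n\,{\de^{n-2\over2}\over r^{n-2}}\left(1-\left(1+{\de^2\over r^2}\right)^{-{n-2\over2}}\right).$$
Since $t\mapsto(1+t)^{-(n-2)/2}$ is convex on $[0,\infty)$ it lies above its tangent line at $0$, whence $1-(1+t)^{-(n-2)/2}\le\frac{n-2}{2}\,t$ for all $t\ge0$; taking $t=\de^2/r^2$ yields $|z_{\de,\xi}(x)|\le\frac{(n-2)\alpha_n}{2}\,\de^{(n+2)/2}\,r^{-n}$. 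Because $r=|x-\xi|\ge{\rm dist}(\xi,\partial\Omega)$ for every $x\in\partial\Omega$, we get $\|z_{\de,\xi}\|_{L^\infty(\partial\Omega)}\le\frac{(n-2)\alpha_n}{2}\,\de^{(n+2)/2}\,{\rm dist}(\xi,\partial\Omega)^{-n}$, and combining this with the maximum-principle identity above gives the asserted expansion, with an $O(\cdot)$-constant depending only on $n$. There is no genuine obstacle here: the only point to watch is the uniformity of the remainder in $\de$ and $\xi$, which is transparent from the two elementary estimates just used — and this is exactly why the statement may simply be quoted from \cite{R}.
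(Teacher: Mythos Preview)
Your argument is correct and is exactly the classical Rey estimate: the paper does not give its own proof but simply cites \cite{R}, and what you have written is precisely that proof (harmonic remainder, maximum principle, elementary Taylor/convexity bound on $\partial\Omega$). Nothing to add.
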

\medskip

Since $\Omega$ is smooth, we can choose small $\epsilon > 0$ such that, for every $x \in \Omega$ with
${\rm dist}(x, \partial\Omega) \le \epsilon$, there is a unique point $x_{\nu} \in \partial\Omega$ satisfying ${\rm dist}(x, \partial\Omega)  = |x - x_{\nu}|$.
For such $x \in \Omega$, we define $x^* = 2x_{\nu} - x$ the reflection point of $x$ with respect to $ \partial\Omega  $.

\medskip
The following two lemmas are proved in \cite{ACP}.
\begin{lemma}\label{lemacca}
It holds true that
$$\left| H(x,y)-{1\over |\bar x-y|^{n-2}}\right|=O\({{\rm dist}(x,\partial\Omega)^n \over  |\bar x-y|^{n-2}}\)$$
and
$$\left|\nabla_x\( H(x,y)-{1\over |\bar x-y|^{n-2}}\)\right|=O\({1 \over  |\bar x-y|^{n-2}}\)$$
for any $x \in \Omega$ with
${\rm dist}(x, \partial\Omega) \le \epsilon$.\end{lemma}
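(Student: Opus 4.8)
The plan is to deduce both bounds from the maximum principle. After possibly shrinking $\epsilon$ so that the nearest--point projection onto $\partial\Omega$ is well defined on the collar $\{\mathrm{dist}(\cdot,\partial\Omega)\le\epsilon\}$, fix $x\in\Omega$ with $d:=\mathrm{dist}(x,\partial\Omega)\le\epsilon$, let $x_\nu\in\partial\Omega$ be its nearest boundary point, and recall $\bar x=2x_\nu-x$; then $\bar x\notin\overline\Omega$ and $\mathrm{dist}(\bar x,\overline\Omega)=d$, so in particular $|\bar x-y|\ge d$ for every $y\in\overline\Omega$. For the first estimate I would regard
$$w(y):=H(x,y)-|\bar x-y|^{2-n}$$
as a function of $y\in\Omega$: since $\bar x\notin\overline\Omega$ the second term is harmonic and smooth on a neighbourhood of $\overline\Omega$, while $y\mapsto H(x,y)$ is harmonic in $\Omega$ with boundary values $|x-y|^{2-n}$ (this is how the regular part is normalised in the Appendix). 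Hence $w$ is harmonic in $\Omega$ with boundary datum $|x-y|^{2-n}-|\bar x-y|^{2-n}$.

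The key point — and the step I expect to be the main obstacle — is the boundary estimate
$$\bigl| |x-y|^{2-n}-|\bar x-y|^{2-n}\bigr|\le C\,d^{\,n}\,|\bar x-y|^{2-n}\qquad\text{for }y\in\partial\Omega .$$
To obtain it I would use that $x$ and $\bar x$ are mirror images through the point $x_\nu\in\partial\Omega$, with the segment $[x,\bar x]$ orthogonal to $\partial\Omega$ at $x_\nu$: writing $y=x_\nu+z$ in coordinates adapted to $T_{x_\nu}\partial\Omega$ and Taylor expanding $\partial\Omega$ at $x_\nu$, one gets $\bigl||x-y|^2-|\bar x-y|^2\bigr|\le C\,d\,|z|^2$ together with $|x-y|,\,|\bar x-y|\ge c(d+|z|)$; splitting into the regimes $|z|\lesssim d$ and $|z|\gtrsim d$ and invoking the smoothness of $\partial\Omega$ then yields the displayed bound. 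Getting the two regimes to patch together and tracking the geometry of $\partial\Omega$ precisely enough to produce the stated power of $d$ is the delicate part. Granting the boundary estimate, I would conclude by comparison: $w$ and $y\mapsto C\,d^{\,n}|\bar x-y|^{2-n}$ are both harmonic in $\Omega$ and the latter dominates $|w|$ on $\partial\Omega$, so the maximum principle applied to $w\mp C\,d^{\,n}|\bar x-\cdot|^{2-n}$ gives the first estimate on all of $\Omega$.

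For the gradient bound one cannot differentiate $w$ in $x$ and quote interior estimates directly, because $x\mapsto|\bar x-y|^{2-n}$ is \emph{not} harmonic in $x$: the map $x\mapsto\bar x$ is nonlinear, through the curved nearest--point projection. Instead I would freeze $x_0\in\Omega$ with $d_0:=\mathrm{dist}(x_0,\partial\Omega)\le\epsilon$ and replace $\bar x$ by the reflection $\bar x^{\,\mathrm{lin}}$ of $x$ across the fixed hyperplane $T_{x_{0,\nu}}\partial\Omega$; then $x\mapsto|\bar x^{\,\mathrm{lin}}-y|^{2-n}$ is genuinely harmonic in $x$, so $\widetilde w(x):=H(x,y)-|\bar x^{\,\mathrm{lin}}-y|^{2-n}$ is harmonic on $B(x_0,d_0/2)\subset\Omega$. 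On that ball $|\bar x-\bar x^{\,\mathrm{lin}}|=O(d_0^2)$, because $\partial\Omega$ departs from its tangent plane at $x_{0,\nu}$ by $O(\mathrm{dist}^2)$, and $|\bar x-y|\asymp|\bar x_0-y|\ge d_0$; hence the first estimate gives $\sup_{B(x_0,d_0/2)}|\widetilde w|\le C\bigl(d_0^{\,n}+d_0^2/|\bar x_0-y|\bigr)|\bar x_0-y|^{2-n}$. The interior gradient estimate for harmonic functions then bounds $|\nabla_x\widetilde w(x_0)|$ by $Cd_0^{-1}$ times this, and writing
$$\nabla_x\bigl(H(x,y)-|\bar x-y|^{2-n}\bigr)\big|_{x_0}=\nabla_x\widetilde w(x_0)+\nabla_x\bigl(|\bar x^{\,\mathrm{lin}}-y|^{2-n}-|\bar x-y|^{2-n}\bigr)\big|_{x_0}$$
and noting that the last term is $O\bigl(|\bar x_0-y|^{1-n}\,|\nabla_x(\bar x-\bar x^{\,\mathrm{lin}})|\bigr)=O\bigl(d_0\,|\bar x_0-y|^{1-n}\bigr)$ (since $\nabla_x(\bar x-\bar x^{\,\mathrm{lin}})$ vanishes at $x_{0,\nu}$ and is $O(\mathrm{dist})$), one arrives, using $|\bar x_0-y|\ge d_0$, at $\bigl|\nabla_x\bigl(H(x_0,y)-|\bar x_0-y|^{2-n}\bigr)\bigr|\le C|\bar x_0-y|^{2-n}$. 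All the constants above are uniform in $y\in\Omega$ and locally uniform in $x$, as the statement requires.
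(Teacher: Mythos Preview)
The paper does not give its own proof of this lemma; it merely records that the result is taken from \cite{ACP}. Your maximum--principle strategy (compare $w(y)=H(x,y)-|\bar x-y|^{2-n}$ with a harmonic barrier after controlling its boundary values) is the standard route and is in the spirit of that reference, so at the level of method there is nothing to contrast.

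There is, however, a genuine gap in your boundary estimate. Your Taylor computation yields
\[
\bigl||x-y|^2-|\bar x-y|^2\bigr|=4\,\bigl|(x-x_\nu)\cdot(y-x_\nu)\bigr|\le C\,d\,|z|^{2},\qquad |x-y|,\ |\bar x-y|\asymp d+|z|,
\]
and hence
\[
\bigl||x-y|^{2-n}-|\bar x-y|^{2-n}\bigr|\le C\,\frac{d\,|z|^{2}}{(d+|z|)^{\,n}}.
\]
In both regimes $|z|\le d$ and $|z|\ge d$ the right--hand side is at most $C\,d\,|\bar x-y|^{2-n}$: one power of $d$, not $d^{\,n}$. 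This is not a defect of your argument but of the target: already for the unit ball, where $H(x,y)=|x|^{2-n}|y-x/|x|^{2}|^{2-n}$, one checks that $H(x,y)-|\bar x-y|^{2-n}=(n-2)\,d\,|\bar x-y|^{2-n}+O(d^{2}|\bar x-y|^{1-n})$, so the exponent $n$ on ${\rm dist}(x,\partial\Omega)$ in the displayed statement cannot be attained. You flag this step as ``the delicate part'' without closing it; in fact it does not close as stated, and you should instead prove the $O(d)$ bound (which is what your sketch actually gives and which suffices for the use made of the lemma in Lemma~\ref{lem3}, where only $H(x,y)\le C|\bar x-y|^{2-n}$ is needed).

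Your treatment of the gradient bound is correct: linearising the reflection at $x_0$ makes $x\mapsto|\bar x^{\mathrm{lin}}-y|^{2-n}$ harmonic, the interior gradient estimate on $B(x_0,d_0/2)$ together with the first bound controls $\nabla_x\widetilde w(x_0)$, and $D\bar x-D\bar x^{\mathrm{lin}}=O(d_0)$ handles the remaining term; using $|\bar x_0-y|\ge d_0$ then gives $|\nabla_x(H-|\bar x-\cdot|^{2-n})|\le C|\bar x-y|^{2-n}$.
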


\end{document}